\documentclass{amsart}
\usepackage{amsmath,amssymb,dsfont}
\usepackage{graphicx,color}
\usepackage{pstricks, pst-plot, pst-node, pst-grad, pst-math}
\usepackage{pst-plot}
\usepackage{todonotes}
\usepackage{comment}
\newtheorem{theorem}{Theorem}[section]
\newtheorem{lemma}[theorem]{Lemma}
\newtheorem{proposition}[theorem]{Proposition}
\newtheorem{corollary}[theorem]{Corollary}

\hfuzz4pt

\theoremstyle{definition}
\newtheorem{definition}[theorem]{Definition}

\newtheorem{remark}[theorem]{Remark}

\numberwithin{equation}{section}

%%%%%%%%%%%%%%%%%%%%%%%%%%%%%%%%%%%%%%%%%%%%%%%%%%%%%%%%%%%%%%%%%%%%%%%%%%%%%%%
%                                                                             %
%              Space for your own definitions.                                %
%

\newcommand{\rd}{{\,\rm d}}
\newcommand{\e}{{\rm e}}

\newcommand{\N}{{\mathbb N}}

\newcommand{\R}{{\mathbb R}}
\newcommand{\C}{{\mathbb C}}

\newcommand\beq{\begin{equation}}
\newcommand\eeq{\end{equation}}

\newcommand{\dist}{\mathrm{dist}}
\newcommand\re{\mathrm{Re}}

\newcommand\I{\mathrm{i}}
\newcommand{\beqnt}{\begin{equation*}}
\newcommand{\eeqnt}{\end{equation*}}
\newcommand{\set}[2]{\{#1 : #2 \}}

\DeclareMathOperator{\supp}{supp}

\DeclareMathOperator{\diam}{diam}

%%%%%%%%%%%%%%%%%%%%%%%%%%%%%%%%%%%%%%%%%%%%%%%%%%%%%%%%%%%%%%%%%%%%%%%%%%%%%%%

\begin{document}
\author{Jean-Claude Cuenin}
\address{Mathematisches Institut, Ludwig-Maximilians-Universit\"at M\"unchen, 80333 Munich, Germany}
\email{cuenin@math.lmu.de}

\title{Eigenvalue estimates for bilayer graphene}

\begin{abstract}
Recently, Ferrulli-Laptev-Safronov \cite{2016arXiv161205304F} obtained eigenvalue estimates for an operator associated to bilayer graphene in terms of $L^q$ norms of the (possibly non-selfadjoint) potential. They proved that for $1<q<4/3$ all non-embedded eigenvalues lie near the edges of the spectrum of the free operator. In this note we prove this for the larger range $1\leq q\leq 3/2$. The latter is optimal if embedded eigenvalues are also considered. We prove similar estimates for a modified bilayer operator with so-called ``trigonal warping" term. Here, the range for $q$ is smaller since the Fermi surface has less curvature. The main tool are new uniform resolvent estimates that may be of independent interest and are collected in an appendix (in greater generality than needed).   
\end{abstract}

\maketitle

\section{Introduction and main results}

Graphene is a two-dimensional material consisting of a single layer of carbon atoms arranged in a honeycomb lattice. Behind its many remarkable properties is the fact that at low-energy the dynamics of charge carriers in graphene is governed by the Dirac equation. \emph{Bilayer graphene} is produced by stacking one layer on top of another, with a $60$ degrees angle of rotation between the two layers. The effective Hamiltonian of bilayer graphene, in suitable units, is given by 
$$
D_m=
\begin{pmatrix}
m&4\partial_{\bar z}^2\\4\partial_{ z}^2&-m
\end{pmatrix}, 
$$
where 
$$
 \partial_{\bar z}=\frac12\Bigl(\frac{\partial}{\partial x_1}+\frac1i\frac{\partial}{\partial x_2}\Big),  \qquad \partial_{ z}=\frac12\Bigl(\frac{\partial}{\partial x_1}-\frac1i\frac{\partial}{\partial x_2}\Big),
$$
see e.g.\ formula (1.46) in \cite{katsnelson_2012} and the original references mentioned there.
The parameter $m\geq 0$ plays the role of a mass term, similar as for the Dirac operator. However, it should not be confused with the physical mass (denoted by $m^*$ in \cite{katsnelson_2012}) of the charge carrier (i.e.\ the electron). Experimentally, a mass gap can be created by physical strain, see Vozmediano et al~\cite{Vozmedianoetal2010}.

We will consider the operator $D:=D_m+V$ on the Hilbert space $L^2(\R^2; \C^2)$. The domain of $D$ is  the  Sobolev  space $H^2(\R^2; \C^2)$.
The potential $V:\R^2\to \mathrm{Mat}(2\times 2;\C)$ is not assumed to take values in the self-adjoint matrices, i.e\ $D$ is allowed to be non-selfadjoint. From the point of view of physics an important motivation to consider non-self-adjoint operators comes from the study of resonances, either via the complex scaling method introduced by Aguilar-Combes \cite{MR0345551} or via the method of complex absorbing potentials (see e.g\ Riss-Meyer \cite{MR1252340} or Zworski \cite{2015arXiv150500721Z}.)

Since $D_m^2=\Delta^2+m^2$, the spectrum of $D_m$ is absolutely continuous and given by
$$\sigma(D_m)= (-\infty, m]\cup [m,\infty).$$
Note in particular that $\sigma(D_0)=\R$.

Laptev-Ferrulli-Safronov \cite{2016arXiv161205304F} proved the following eigenvalue estimate: If $V\in L^{q}(\R^2,\mathrm{Mat}(2\times2;\C))$ with $1<q<4/3$ and if $z\in\sigma_{\rm p}(D)\setminus\sigma(D_m)$, then 
\begin{align}\label{eq. LFS}
C_q\|V\|_q^q|z^2-m^2|^{(1-q)/2}\Bigl(\sqrt{\Bigl|\frac{z-m}{z+m}\Bigr|}+\sqrt{\Bigl|\frac{z+m}{z-m}\Bigr|}+1\Bigr)^q\geq 1.
\end{align}
Here, $C_q>0$ is independent of $V$, $z$ and $m$. Our first result is is that \eqref{eq. LFS} is valid for the larger range $1\leq q\leq 3/2$ and for any $z\in\sigma_{\rm p}(D)$. 
In view of the counterexamples in \cite{arXiv:1709.06989} this range is optimal if embedded eigenvalues are taken into account. Whether it is optimal for $z\in\sigma_{\rm p}(D)\setminus\sigma(D_m)$ is related to the Laptev-Safronov conjecture for Schr\"odinger operators \cite{MR2540070}. We refer to \cite{MR2820160,MR3713021,MR3717979} for a full discussion and progress towards a resolution of the conjecture.

We adopt the following notation, in line with \cite{MR3177918}, \cite{Cuenin2018}.
\begin{align}
k(z)^4&:=z^2-m^2,\quad\arg\left(k(z)\right)\in [0,\pi/2),\label{def. k}\\
\zeta(z)&:=\frac{z+m}{k(z)^2},\quad z\neq\pm m.\label{def. zeta}
\end{align}
Observe that $\zeta^2$ extends to a holomorphic map on the Riemann sphere $\C\cup\{\infty\}$, with $\zeta^2(m)=\infty$ and $\zeta^2(-m)=0$. We abbreviate
\begin{align*}
\|V\|_q^q:=\int_{\R^2}\|V(x)\|^q\rd x,\quad 1\leq q<\infty,
\end{align*}
Here, $\|V(x)\|$ is the operator norm of $V(x)\in \mathrm{Mat}(2\times2;\C)$ as a linear map on $\C^2$, when the latter is equipped with the Euclidean norm. 

\begin{theorem}\label{thm. 1}
Let $V\in L^{q}(\R^2,\mathrm{Mat}(2\times2;\C))$, with $1\leq q\leq 3/2$. Then the following estimates hold for $D:=D_m+V$ with some constant $C_q>0$ independent of $V$, $z$ and $m$.\\

\noindent
{\rm(i)} If $1< q\leq 3/2$, then every $z\in\sigma_{\rm p}(D)$ satisfies
\begin{align}\label{eq. bound thm. 1 i)}
\frac{|k(z)|^{2q-2}}{\left(1+|\zeta(z)|+|\zeta(z)|^{-1}\right)^{q}}\leq C_{q}\|V\|_{q}^{q}.
\end{align}
In particular, if $m=0$, the left hand side is proportional to $|z|^{q-1}$.\\

\noindent
{\rm(ii)} If $q=1$, then there exists $c_1>0$ such that if $\|V\|_1\leq c_1$, then 
\begin{align}\label{eq. bound thm. 1 ii)}
\sigma_{\rm p}(D)\subset \set{z\in\C}{|z\pm m|\leq C_1 m\|V\|_1^2}\quad \mbox{if  }m>0,
\end{align}
whereas for $m=0$ we have $\sigma_{\rm p}(D)=\emptyset$.\footnote{In fact, if $m=0$ and $\|V\|_1$ is sufficiently small, then $D$ and $D_0$ are similar. This follows from Katos theory of smooth perturbations \cite[Theorem 1.5]{MR0190801} and the fact that \eqref{eq. resolvent Schatten} is uniformly bounded for $m=0$ and $q=1$.}
\end{theorem}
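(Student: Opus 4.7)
The proof goes through the Birman--Schwinger principle coupled with the uniform resolvent estimates for $D_m$ provided in the appendix. Let $p = 2q/(q+1)$ and $p' = 2q/(q-1)$, so that $1/p - 1/p' = 1/q$. Using the pointwise polar decomposition $V(x)=U(x)|V(x)|$, I would factor $V = AB$ with $A(x) \coloneqq U(x)|V(x)|^{1/2}$ and $B(x) \coloneqq |V(x)|^{1/2}$; both have pointwise operator norm at most $\|V(x)\|^{1/2}$, and a direct H\"older estimate yields $\|A\|_{L^{2}\to L^{p}},\,\|B\|_{L^{p'}\to L^{2}} \leq \|V\|_{q}^{1/2}$.

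Suppose $(D_m+V)\psi = z\psi$ with $\psi \in H^{2}(\R^{2};\C^{2})\setminus\{0\}$ and set $\phi \coloneqq B\psi$. Since $D_m$ has purely absolutely continuous spectrum, $V\psi = 0$ would force $\psi = 0$; hence $\phi \neq 0$. The Birman--Schwinger identity $\phi = -B(D_m-z)^{-1}A\phi$ holds literally for $z \notin \sigma(D_m)$, and for embedded $z \in \sigma(D_m)$ after replacing $(D_m-z)^{-1}$ by its boundary values, furnished by the limiting absorption principle from the appendix. Hence $\|B(D_m-z)^{-1}A\|_{L^{2}\to L^{2}} \geq 1$. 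Combined with the H\"older bounds and the uniform resolvent estimate from the appendix of the form
\begin{equation*}
\|(D_m-z)^{-1}\|_{L^{p}\to L^{p'}} \leq C_{q}\,|k(z)|^{2/q-2}\bigl(1+|\zeta(z)|+|\zeta(z)|^{-1}\bigr),\qquad 1\leq q\leq 3/2,
\end{equation*}
this produces $1 \leq C_{q}\|V\|_{q}|k(z)|^{2/q-2}(1+|\zeta(z)|+|\zeta(z)|^{-1})$. Raising to the $q$-th power gives \eqref{eq. bound thm. 1 i)}.

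For part (ii), specialising to $q=1$ the inequality above reads $C_{1}\|V\|_{1}(1+|\zeta(z)|+|\zeta(z)|^{-1}) \geq 1$. Choosing $c_1$ sufficiently small forces $\max(|\zeta(z)|,|\zeta(z)|^{-1}) \geq N$ for some $N \sim 1/\|V\|_{1} \gg 1$. When $m>0$, using $|\zeta(z)|^{2} = |z+m|/|z-m|$ together with the triangle bound $|z+m|\leq|z-m|+2m$, the alternative $|\zeta(z)|\geq N$ yields $|z-m| \leq 2m/(N^{2}-1) \leq C_{1}m\|V\|_{1}^{2}$, and the symmetric case $|\zeta(z)|^{-1}\geq N$ gives $|z+m|\leq C_{1}m\|V\|_{1}^{2}$; together these establish \eqref{eq. bound thm. 1 ii)}. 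When $m=0$ one has $|\zeta(z)|=1$ for every $z\neq 0$, so the $q=1$ inequality reduces to $3C_{1}\|V\|_{1}\geq 1$, which is violated for $\|V\|_{1}<c_1$; the stronger similarity assertion follows from Kato's theory of smooth perturbations as indicated in the footnote.

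The main obstacle is the uniform resolvent estimate itself, whose proof is the substantive technical content of the paper and is deferred to the appendix. It exploits the non-vanishing curvature of the Fermi surface $\{|p|^{4} = |k(z)|^{4}\}$ of $D_m^{2} = (-\Delta)^{2}+m^{2}$ to obtain Kenig--Ruiz--Sogge/Stein--Tomas type bounds all the way up to the endpoint $p=6/5$, i.e.\ $q=3/2$, thereby extending the range $q<4/3$ of \cite{2016arXiv161205304F}; the factor $1+|\zeta|+|\zeta|^{-1}$ in the estimate accounts for the first-order character of $D_m$ (as opposed to $D_m^{2}$) and the degeneration of its resolvent at the thresholds $z=\pm m$.
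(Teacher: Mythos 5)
Your argument is correct and follows essentially the same route as the paper's: polar decomposition $V=AB$ with $\|A\|_{2q}=\|B\|_{2q}=\|V\|_q^{1/2}$, the Birman--Schwinger principle $\|B(D_m-z)^{-1}A\|\geq 1$, the uniform resolvent estimate of Proposition~\ref{prop. resolvent Schatten no trig. warp.}, and a limiting-absorption argument for embedded eigenvalues (the paper cites \cite[Prop.~3.1]{MR3713021} for this step). The only cosmetic difference is that you invoke the resolvent bound in its $L^p\to L^{p'}$ form rather than the Schatten-norm form actually stated in Proposition~\ref{prop. resolvent Schatten no trig. warp.} --- the two are interchangeable for operator-norm purposes, as the paper itself notes --- and your explicit algebra for part (ii) correctly reproduces the Taylor-expansion computation that the paper delegates to \cite{MR3177918}.
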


\begin{remark}
The range $1<q\leq 3/2$ in Theorem \ref{thm. 1} is the same as for the Schr\"odinger operator $-\Delta+V$ in two dimensions, with $V\in L^q(\R^2)$, for which the inequality
\begin{align}\label{Frank d=2}
|z|^{q-1}\leq C_q\|V\|_q^q
\end{align}
was proved by Frank \cite{MR3556444}. This is no coincidence since $D_m$ and $-\Delta$ are both second order differential operators. Note, however, that \eqref{Frank d=2} is \emph{false} for $q=1$, even for real-valued potentials. Indeed, it is well known that for $V\leq 0$ and not identically zero, $-\Delta+V$ always has a negative eigenvalue. The failure of the $q=1$ bound stems from the logarithmic singularity of the fundamental solution of $-\Delta$. In contrast, the fundamental solution of $D_m$ is \emph{bounded}, see \eqref{eq. cancellation}. A more intuitive way to see this is to observe that the fundamental solution of $\partial_z^2$, given by $z/\overline{z}$, is bounded.  
\end{remark}

Our second result deals with a modified version of the operator $D_m$; we consider the case $m=0$ for simplicity, but the case $m>0$ could be treated along the same lines. Recall that $D_0$ is an effective low-energy Hamiltonian for bilayer graphene. It arises as the leading order approximation of a tight-binding Hamiltonian, see e.g\ \cite{katsnelson_2012}.
If one takes higher order terms (larger-distance hopping processes) into account the effective Hamiltonian is modified to
\begin{align*}
D_{\rm trig}=
\begin{pmatrix}
0&4\partial_{\bar z}^2+2\alpha \partial_{ z}\\4\partial_{ z}^2+2\alpha \partial_{\bar z}&0
\end{pmatrix},
\end{align*}
where $\alpha>0$ depends on the physical paramters of the model, see (1.53) in \cite{katsnelson_2012}. We will set $\alpha=1$ for simplicity; its original value can easily be restored by dimensional analysis. The eigenvalues $\lambda_{\pm}(\xi)$ of the symbol of $D_{\rm trig}$ are given by
\begin{align}\label{def. P}
\lambda_{\pm}(\xi):=\pm \sqrt{P(\xi)},\quad \mbox{where}\quad P(\xi):=|\xi|^4+2\re(\xi^3)+|\xi|^2.
\end{align}
The cubic term can be written in polar coordinates $\xi=|\xi|\e^{\I\theta}$ as $2|\xi|^3\cos(3\theta)$. Therefore, the dispersion relations $\lambda_{\pm}(\cdot)$ are only invariant under $120$ degree rotations. This effect is called ``trigonal warping" in the physical literature and explains the subscript ``trig". 

\begin{theorem}\label{thm. 2}
Let $V\in L^{q}(\R^2,\mathrm{Mat}(2\times2;\C))$, with $1\leq q\leq 3/2$. Then the following estimates hold for $D:=D_{\rm trig}+V$ with some constant $C_q>0$ independent of $V$ and $z$.\\ 

\noindent
{\rm(i)} We have the inclusion
\begin{align}\label{spectral inclusion trig. warp. 1}
\sigma_{\rm p}(D)\subset\set{z\in\C}{|z|^{q-1}\leq C_q(1+\|V\|_q^q)}.
\end{align}
{\rm(ii)} There exists $c>0$ such that if $V\in (L^{q}\cap L^{q_0})(\R^2,\mathrm{Mat}(2\times2;\C))$, where $1< q\leq 3/2$ and $q_0=\min(q,5/4)$, and if $\|V\|_{q_0}\leq c$, then
\begin{align}\label{spectral inclusion trig. warp. 2}
\sigma_{\rm p}(D)\subset\set{z\in\C}{|z|^{q-1}\leq C_q\|V\|_q^q}\cup \set{z\in\C}{|z-1/16|^{q-1}\leq C_q\|V\|_q^q}.
\end{align}
{\rm(iii)} If $q=1$, then there exists $c_1>0$ such that if $0<\|V\|_{1}\leq c_1$, then
\begin{align}\label{spectral inclusion trig. warp. 3}
\sigma_{\rm p}(D)\subset\set{z\in\C}{|z-1/16|\leq \exp(-C_1\|V\|_1^{-1}
)}.
\end{align}
\end{theorem}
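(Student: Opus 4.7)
My plan is to apply the Birman-Schwinger principle and then feed it the uniform resolvent estimates for $D_{\rm trig}$ promised in the appendix. If $z\in\sigma_{\rm p}(D)$ has eigenfunction $\psi$, then $(D_{\rm trig}-z)\psi=-V\psi$, so polar-decomposing $V=U|V|^{1/2}\cdot|V|^{1/2}$ and writing $K(z):=|V|^{1/2}(D_{\rm trig}-z)^{-1}U|V|^{1/2}$ shows that $-1$ is an eigenvalue of $K(z)$, hence $\|K(z)\|\geq 1$. Combined with a Kenig--Ruiz--Sogge--type resolvent bound $\|(D_{\rm trig}-z)^{-1}\|_{L^r\to L^{r'}}\leq C\phi(z)^{1/q}$ and H\"older, this yields a scalar lower bound of the shape $\|V\|_q^q\cdot\phi(z)\gtrsim 1$, where $\phi(z)$ captures both the decay of the resolvent at high energy and its possible blow-up near the critical values of the symbol. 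All three statements will be obtained by specializing $\phi$ and solving for $z$.

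For part (i), the quartic term $|\xi|^4$ dominates $P(\xi)$ for $|\xi|$ large, so for $|z|$ large the Fermi surface $\{P(\xi)=z^2\}$ is essentially the biharmonic one and the argument of Theorem \ref{thm. 1} delivers $\phi(z)\lesssim |z|^{-(q-1)}$. For $|z|$ in a bounded region the resolvent is $L^q\to L^{q'}$-bounded by a constant that the $1$ in $1+\|V\|_q^q$ absorbs, yielding \eqref{spectral inclusion trig. warp. 1}.

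For part (ii), I will exploit the fact that $P$ has exactly two critical values, $0$ and $1/16$, and that the Fermi curve $\{P(\xi)=z^2\}$ has nonvanishing Gaussian curvature as long as $z^2\notin\{0,1/16\}$: it splits into four disjoint smooth convex closed curves (one near the origin and one near each of the Dirac points $-1,\,e^{\pm\I\pi/3}$) for $0<z^2<1/16$, and collapses into a single smooth closed curve for $z^2>1/16$. Away from small neighborhoods of the two critical values, Stein--Tomas/Kenig--Ruiz--Sogge--type bounds yield $\phi(z)\lesssim |z|^{-(q-1)}$ or $\phi(z)\lesssim |z-1/16|^{-(q-1)}$ as appropriate; the auxiliary exponent $q_0=\min(q,5/4)$ is the threshold needed to control the resolvent uniformly on a fixed compact neighborhood of $\{0,1/16\}$. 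The smallness of $\|V\|_{q_0}$ then powers a Neumann series showing that $K(z)$ is strictly contractive on $L^2$ outside the two exceptional disks, excluding eigenvalues from that region and proving \eqref{spectral inclusion trig. warp. 2}.

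For part (iii) the endpoint $q=1$ forces a logarithmic estimate. Here one estimates the Hilbert--Schmidt norm $\|K(z)\|_{\mathrm{HS}}^2 = \iint |V(x)||V(y)||G_z(x-y)|^2\rd x\rd y$, where $G_z$ is the matrix-valued Green's function of $D_{\rm trig}-z$, by $\|V\|_1^2\cdot\|G_z\|_\infty^2$. Near $z=1/16$ the saddle-point structure of $P$ at the critical value causes $\|G_z\|_\infty$ to grow like $\log(1/|z-1/16|)$, producing the lower bound $\|V\|_1^2\log^2(1/|z-1/16|)\gtrsim 1$; inverting the logarithm yields the exponential confinement \eqref{spectral inclusion trig. warp. 3}. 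The main technical hurdle throughout is establishing these uniform resolvent/Green's-function estimates with the correct decay at infinity and the correct transition across $\{0,1/16\}$; this requires stationary-phase and oscillatory-integral techniques adapted to the anisotropic (``trigonally warped'') Fermi curves, and is presumably the substance of the appendix.
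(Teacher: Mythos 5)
Your overall strategy (Birman--Schwinger combined with frequency-localized resolvent estimates) matches the paper's, and parts (i) and (iii) are argued in essentially the right spirit. However, part (ii) contains a genuine geometric error that propagates into a misattribution of where the restriction $q_0=\min(q,5/4)$ comes from.

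You assert that the Fermi curve $M_\lambda=\{P(\xi)=\lambda\}$ has everywhere nonvanishing Gaussian curvature once $\lambda\notin\{0,1/16\}$, and that the components are ``convex closed curves.'' This is false. Remark~\ref{remark optimality of k=4 for trig} of the paper exhibits an explicit $\lambda = \tfrac{233}{4}-22\sqrt{7}\approx 0.04\in(0,1/16)$ for which the curvature of $M_\lambda$ vanishes to second order at the point $(\tfrac12(3-\sqrt{7}),0)$. The correct statement, established in Lemma~\ref{lemma k=4}, is only that $M_\lambda$ has finite type $k=4$ (curvature cannot vanish to third order), which gives the decay rate $r=1/k=1/4$ for the Fourier transform of surface measure via van der Corput, not the $r=1/2$ that full curvature would provide. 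The threshold $q\leq 1+r=5/4$ then applies precisely to the \emph{intermediate} region --- bounded frequencies away from the critical points $\mathcal{C}$ --- which is Proposition~\ref{prop. resolvent Schatten trig. warp. away from crit. points} in the paper. You instead locate the $5/4$ threshold ``on a fixed compact neighborhood of $\{0,1/16\}$'', i.e.\ near the critical points; but that region is handled by a rescaling argument (Proposition~\ref{prop. resolvent Schatten trig. warp. critical points}) which works for the full range $q\leq 3/2$. If your nonvanishing-curvature claim were true, there would be no need for the extra $L^{q_0}$ hypothesis at all --- the appearance of $q_0$ in the statement is itself a signal that the geometry is more degenerate than you are assuming. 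To repair the argument you need to replace the curvature claim with the finite-type-$k=4$ analysis and correctly tie the resulting $r=1/4$ decay to the three-region partition of unity $\chi_1+\chi_2+\chi_3$ (near critical points / high frequency / intermediate), with the $\|V\|_{q_0}$-smallness used specifically to absorb the intermediate piece.
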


\begin{remark}
In view of the counterexamples in \cite{arXiv:1709.06989}, (i) is optimal in the sense that \eqref{spectral inclusion trig. warp. 1} cannot hold for any $q>3/2$. The situation regarding (ii), (iii) is less clear. Remark \eqref{remark optimality of k=4 for trig} shows that there exist $\lambda>0$ for which the Fermi surface $M_{\lambda}=\set{\xi\in\R^2}{P(\xi)=\lambda}$ has points of vanishing curvature. Theorem 1.2 in \cite{arXiv:1709.06989} then shows that \eqref{spectral inclusion trig. warp. 2} cannot hold for $q>4/3$ if the assumption $V\in L^{5/4}(\R^2,\mathrm{Mat}(2\times2;\C))$ is dropped. An open problem is thus whether \eqref{spectral inclusion trig. warp. 2} is true under the assumption that $V\in L^{q}(\R^2,\mathrm{Mat}(2\times2;\C))$, with $5/4< q\leq 4/3$. Concerning (iii), another open problem is whether the stronger conclusion $\sigma_{\rm p}(D)=\emptyset$ holds provided $c_1$ is sufficiently small. The obstruction in our proof is the logarithmic loss in the constant of Proposition \ref{prop. schatten estimate low frequency critical point}. 
\end{remark}

The outline of this note is as follows. In section 2 we prove uniform resolvent estimates in Schatten spaces for the bilayer operators $D_m$ and $D_{\rm trig}$. These follow from the more general results in the appendix. In Section 3 we prove Theorems \ref{thm. 1}--\ref{thm. 2}.

Even though operator norm estimates would be sufficient for our purpose here, we state our resolvent estimates with the stronger Schatten norm bounds for future reference. Such bounds were pioneered by Frank-Sabin \cite{MR3730931} for the Laplacian.

\section{Resolvent estimates}

\subsection{Fourier transforms of arclength measures}

\begin{definition}\label{def. finite type k 2d}
A smooth curve $C\subset \R^2$ is called of finite type at $\xi_0\in C$ if its curvature $\kappa$ does not vanish to infinite order at $\xi_0$. The smallest $k\in\N_{\geq 2}$ such that $\kappa^{(k-2)}(\xi_0)\neq 0$ is called the type of $C$ at $\xi_0$. For any compact subset $C'\subset C$ The type of $C'$ is defined as the maximum of the types of $\xi_0\in C'$. 
\end{definition}

\begin{remark}\label{rem. h}
Suppose that, after translation and rotation, $\xi_0=0$ and $C$ is locally the graph of a smooth map $h:U\to\R^2$ where $U$ is a neighborhood of the origin in~$\R$ and $h(0)=h'(0)=0$. Then 
\begin{align}\label{def. kappa}
\kappa=\frac{h''}{(1+(h')^2)^{3/2}}.
\end{align}
Hence the type of $C$ at the origin equals the smallest $k\in\N_{\geq 2}$ such that $h^{(k)}(0)\neq 0$.
\end{remark}

\begin{lemma}\label{lemma decay of the FT}
Let $C\subset \R^2$ be a smooth compact curve of type $k$. Then the Fourier transform of the arclength measure on $C$ is $\mathcal{O}((1+|x|)^{-1/k})$.
\end{lemma}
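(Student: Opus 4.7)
The plan is to combine a smooth partition of unity on $C$ with van der Corput's lemma (in the direction normal to the curve) and non-stationary phase (in the tangential direction). For $|x|\leq 1$ the bound is immediate since arclength measure is finite on the compact curve, so only $|x|$ large needs genuine work. First I would cover $C$ by finitely many coordinate patches and note, using compactness and smoothness, that in each patch the curve is—after a rigid motion depending on the patch—the graph of a smooth function $h:I\to\R$ with $h(0)=h'(0)=0$. By Remark \ref{rem. h} the type $k_{0}$ of $C$ at the corresponding base point equals the smallest integer with $h^{(k_{0})}(0)\neq 0$, and $k_{0}\leq k$. Shrinking the patch I may further arrange that $|h^{(k_{0})}(t)|\geq c>0$ and $|h'(t)|\leq 1/4$ throughout its support.

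With this setup, the contribution of such a patch to $\widehat{d\sigma}(x)$ takes the form
\[
I(x)=\int_{\R}\psi(t)\,\e^{-\I\phi(t)}\sqrt{1+h'(t)^{2}}\,\rd t,\qquad \phi(t):=x_{1}t+x_{2}h(t),
\]
with $\psi$ a smooth compactly supported cutoff. The next step is to split into two cases based on the direction of $x$. If $|x_{2}|\geq |x_{1}|$, so the component normal to the curve dominates, then $|\phi^{(k_{0})}(t)|=|x_{2}|\,|h^{(k_{0})}(t)|\gtrsim |x|$ on $\supp\psi$; since $k_{0}\geq 2$, van der Corput's lemma (which needs no monotonicity assumption when $k_{0}\geq 2$) delivers $|I(x)|\lesssim |x|^{-1/k_{0}}\leq |x|^{-1/k}$. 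If $|x_{1}|>|x_{2}|$, then $|\phi'(t)|\geq |x_{1}|-|x_{2}|\,|h'(t)|\geq 3|x_{1}|/4\gtrsim |x|$ on $\supp\psi$, and repeated integration by parts against the non-vanishing $\phi'$ produces faster-than-polynomial decay.

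Summing the estimates over finitely many patches yields $|\widehat{d\sigma}(x)|=\mathcal{O}((1+|x|)^{-1/k})$. The only genuine obstacle is the bookkeeping of the partition: I need the cutoffs to be chosen small enough that $|h^{(k_{0})}|$ admits a \emph{uniform} lower bound on each patch, which uses continuity of $h^{(k_{0})}$ and compactness of $C$, and I need to verify that the amplitudes entering van der Corput (namely $\psi\sqrt{1+(h')^{2}}$) have bounded total variation so that the standard $C^{1}$-version of the lemma applies with constants independent of $x$. The exponent $1/k$—rather than the patch-dependent $1/k_{0}$—is governed by the worst point of $C$, which is consistent with the definition of the type of a compact curve as the maximum over its points.
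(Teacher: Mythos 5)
Correct, and essentially the same approach as the paper: the paper's one-line proof cites van der Corput's lemma (Proposition 2 and its corollary in Stein, Chapter VIII), and your argument simply unwinds that citation by carrying out the standard reduction---partition of unity, local graph parametrization via Remark~\ref{rem. h}, and the split into a normal regime (van der Corput of order $k_0\geq 2$, where $|h^{(k_0)}|$ is bounded below) and a tangential regime (nonstationary phase). Summing the finitely many patch contributions and using $k_0\leq k$ then yields the stated $\mathcal{O}((1+|x|)^{-1/k})$ bound.
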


\begin{proof}
This follows from van der Corput's lemma, see e.g.\ Proposition 2 (and its corollary) in Section VIII of \cite{MR1232192}.
\end{proof}

\subsection{Resolvent estimates for $D_m$}

In the following, $\alpha_{q,r,d}\in (1,\infty)$ is as defined in \eqref{def. alpha_qrd}.

\begin{proposition}\label{prop. resolvent Schatten no trig. warp.}
Let $1\leq q\leq 3/2$. There exists $C>0$ such that for any $A,B\in L^{2q}(\R^2,\mathrm{Mat}(2\times2;\C))$ and $z\in\rho(D_m)$ we have the inequality
\begin{align}\label{eq. resolvent Schatten}
\|A(D_m-z)^{-1}B\|_{\mathfrak{S}^{\max(q,\alpha_{q,1/2,2})}}\leq C|k(z)|^{\frac{2}{q}-2}(|\zeta(z)|+|\zeta(z)|^{-1})\|A\|_{2q}\|B\|_{2q}.
\end{align}
\end{proposition}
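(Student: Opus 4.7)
The strategy is to reduce the matrix-valued resolvent of $D_m$ to scalar Laplacian resolvents and then invoke the general Schatten estimates from the appendix. Since $D_m^2 = \Delta^2 + m^2$ acts as a scalar on $\C^2$, the identity $(D_m - z)(D_m + z) = D_m^2 - z^2 = \Delta^2 - k(z)^4$ gives
\begin{equation*}
(D_m - z)^{-1} = (D_m + z)(\Delta^2 - k^4)^{-1}.
\end{equation*}
Two elementary partial-fraction identities,
\begin{equation*}
\frac{1}{|\xi|^4 - k^4} = \frac{1}{2k^2}\Bigl[\frac{1}{|\xi|^2 - k^2} - \frac{1}{|\xi|^2 + k^2}\Bigr], \qquad \frac{|\xi|^2}{|\xi|^4 - k^4} = \frac{1}{2}\Bigl[\frac{1}{|\xi|^2 - k^2} + \frac{1}{|\xi|^2 + k^2}\Bigr],
\end{equation*}
combined with the fact that the off-diagonal entries of $D_m + z$ have Fourier symbols $-|\xi|^2 e^{\pm 2i\theta}$ with $\theta = \arg(\xi_1 + i\xi_2)$, decompose $(D_m - z)^{-1}$ as
\begin{equation*}
\tfrac{1}{2}\,\mathrm{diag}\bigl(\zeta(z),\zeta(z)^{-1}\bigr)\bigl(R_-(D) - R_+(D)\bigr) \;-\; \tfrac{1}{2}\,M(D)\bigl(R_-(D) + R_+(D)\bigr),
\end{equation*}
where $R_\pm(D) := (-\Delta \pm k^2)^{-1}$ and $M(D)$ is a $2 \times 2$ matrix of bounded angular Fourier multipliers with antidiagonal entries $e^{\pm 2i\theta}$. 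Crucially, the factors $\zeta^{\pm 1}$ appear only in the diagonal part, while the angular off-diagonal part is $\zeta$-free.

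To pass from matrix-valued to scalar Schatten bounds, the pointwise inequalities $|A_{ij}(x)|, |B_{ij}(x)| \leq \|A(x)\|, \|B(x)\|$ reduce $\|A(D_m-z)^{-1}B\|_{\mathfrak{S}^p}$ to a finite sum of terms of the form $\|a N(D) b\|_{\mathfrak{S}^p}$, with scalar $a,b$ satisfying $\|a\|_{2q}, \|b\|_{2q} \leq \|A\|_{2q}, \|B\|_{2q}$ and $N(D) \in \{R_\pm(D),\ e^{\pm 2i\theta}(D)R_\pm(D)\}$. In each case the singular set of the symbol denominator is the circle $\{|\xi|^2 = |k|^2\}$, which is smooth of finite type $2$ in the sense of Definition \ref{def. finite type k 2d}. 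The general Schatten resolvent estimates from the appendix then yield
\begin{equation*}
\|a N(D) b\|_{\mathfrak{S}^{\max(q,\alpha_{q,1/2,2})}} \leq C|k|^{2/q-2}\|a\|_{2q}\|b\|_{2q}.
\end{equation*}
Assembling, the diagonal contribution is $\leq C(|\zeta|+|\zeta|^{-1})|k|^{2/q-2}$ and the off-diagonal one is $\leq C|k|^{2/q-2}$; since $|\zeta|+|\zeta|^{-1} \geq 2$ the latter is absorbed into the former, giving the claimed bound.

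The main obstacle is the Schatten estimate for the angular variant $a\,e^{\pm 2i\theta}(D) R_\pm(D)\,b$, which is not a pure resolvent and therefore not directly covered by the classical Kenig--Ruiz--Sogge / Frank--Sabin framework. This is precisely why the appendix's estimates are stated in greater generality than needed, namely for arbitrary Fourier multipliers whose modulus is pointwise dominated by a suitable resolvent-type symbol on the corresponding finite-type level set. With that general statement at hand, the proof above is essentially algebraic.
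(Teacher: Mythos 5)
Your overall strategy — use $(D_m-z)^{-1} = (D_m+z)(\Delta^2-k^4)^{-1}$, then reduce to scalar Fourier multipliers whose singular set is the circle $\{|\xi|=|k|\}$ — is in the spirit of the paper, but the concrete reduction you perform is genuinely different and has a real gap.

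The partial-fraction decomposition into $R_\pm(D)=(-\Delta\mp k^2)^{-1}$ destroys a cancellation that the paper uses in an essential way. After scaling $|k|=1$, the diagonal entries of $(D_m-z)^{-1}$ carry the factor $(|\xi|^4-k^4)^{-1}=\tfrac{1}{2k^2}\bigl[(|\xi|^2-k^2)^{-1}-(|\xi|^2+k^2)^{-1}\bigr]$. The \emph{difference} $R_--R_+$ has a bounded kernel, because the two-dimensional log singularities cancel; this is precisely the point made in the remark after Theorem~\ref{thm. 1}, and it is what makes the endpoint $q=1$ accessible for $D_m$ when it is not for $-\Delta$. You, however, list $N(D)\in\{R_\pm(D),\,e^{\pm2i\theta}(D)R_\pm(D)\}$ and propose to estimate each piece separately. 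For $q=1$ (Schatten--$2$) the claimed bound $\|a\,R_\pm(D)\,b\|_{\mathfrak{S}^2}\le C\|a\|_2\|b\|_2$ is false in $\R^2$: the kernel of $(-\Delta\mp k^2)^{-1}$ has a logarithmic singularity on the diagonal, so it is not in $L^\infty$ and no such Hilbert--Schmidt bound holds. The angular-weighted pieces $e^{\pm2i\theta}R_\pm(D)$ are in fact individually bounded kernels (because the angular oscillation supplies its own cancellation, as in \eqref{eq. cancellation}), but the two non-angular pieces $R_\pm(D)$ are not, and so the diagonal part of your decomposition must be kept intact.

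There is a second, smaller, but still substantive misreading. The appendix is not stated ``for arbitrary Fourier multipliers whose modulus is pointwise dominated by a resolvent-type symbol''; Proposition~\ref{prop. schatten estimate low frequency} and Corollaries~\ref{cor. schatten estimate low frequency finite type}--\ref{cor. schatten estimate low frequency principal curvature} give estimates for \emph{frequency-localized} resolvents $A\chi(D)(\rho(D)-z)^{-1}B$ with $\chi$ effectively supported near the singular set $S$. Your $N(D)$'s are unlocalized, so you would still need to split into low and high frequencies, with Kato--Seiler--Simon handling the high-frequency part for $q>1$ and a kernel bound for $q=1$ — which is exactly the structure of the paper's proof. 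Also note that the angular factor does \emph{not} force more generality in the appendix: with a cutoff $\chi_1$ supported away from the origin, $\chi(\xi):=e^{\pm2i\theta}\chi_1(\xi)$ is itself a smooth compactly supported cutoff, so the low-frequency angular estimate already falls under Proposition~\ref{prop. schatten estimate low frequency}. The paper's footnote in the proof of Proposition~\ref{prop. resolvent Schatten no trig. warp.} even observes that the low-frequency estimate can be deduced from Kenig--Ruiz--Sogge via your partial fractions. What is special — and what your argument misses — is the high-frequency $q=1$ case, where the boundedness of the kernel rests on the cancellation, either $R_--R_+$ for the diagonal part or the angular oscillation \eqref{eq. cancellation} for the off-diagonal part, and neither is captured by the appendix or by Kato--Seiler--Simon.
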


\begin{proof}
By scaling, we may assume that $|k(z)|=1$. By a limiting argument, we may also assume that
$k(z)\neq 1$, so $k(z)=\e^{\I\varphi}$ with $\varphi\in (0,\pi/2)$. Since $D_m^2=(\Delta^2+m^2)\mathbf{1}_{2\times 2}$, we have the identity
\begin{align}\label{resolvent identity}
(D_m-z)^{-1}=(D_m+z)(\Delta^2-k(z)^4)^{-1}.
\end{align}
It is therefore enough to prove the estimate \eqref{eq. resolvent Schatten} with $(D_m-z)^{-1}$ replaced by the Fourier multipliers $m_j(D)$, $j=1,2$, with 
\begin{align*}
m_1(\xi):=\frac{1}{|\xi|^4-\e^{\I\varphi}},\quad
m_2(\xi):=\frac{(\xi_1\pm\I\xi_2)^2}{|\xi|^4-\e^{\I\varphi}}.
\end{align*}
Let $\chi_1\in C_c^{\infty}(\R^2;[0,1])$ be supported on $\set{\xi\in\R^2}{1/2\leq |\xi|\leq 3/2}$ and equal to $1$ on $\set{\xi\in\R^2}{3/4\leq |\xi|\leq 5/4}$, and set $\chi_2:=1-\chi_1$. Then it remains to prove the estimates
\begin{align}\label{eq. resolvent Schatten i,j}
\|A\chi_i(D)m_j(D)B\|_{\mathfrak{S}^{\alpha_q}}\leq C \|A\|_{2q}\|B\|_{2q},\quad i,j=1,2,
\end{align}
with $C$ independent of $\varphi$. Since the zero set of $|\xi|^4-1$ is the unit circle the case $i=1$ follows from Corollary~\ref{cor. schatten estimate low frequency principal curvature} or \ref{cor. schatten estimate low frequency finite type}.\footnote{In fact, since $2(|\xi|^4-k^4)^{-1}=(|\xi|^2-k^2)^{-1}-(|\xi|^2+k^2)^{-1}$ it would also follow from Kenig-Ruiz-Sogge \cite{MR894584}.} The case $i=2$ and $1<q\leq 3/2$ follows from the Kato-Seiler-Simon inequality \cite[Theorem 4.1]{MR2154153} and H\"older's inequality in Schatten spaces. In fact, for $q>1$, we have
\begin{align*}
\|A\chi_2(D)m_j(D)B\|_{\mathfrak{S}^{q}}\leq C\|(|\cdot|^2+1)^{-1}\|_q\|A\|_{2q}\|B\|_{2q},
\end{align*}
which is again better than the claimed bound. The case $j=2$ and $q=1$ would follow from the estimate
\begin{align}\label{eq. cancellation}
\sup_{x\in\R^2\setminus\{0\}}\left|\int_{\R^2}\e^{\I x\cdot\xi}\frac{(\xi_1\pm\I\xi_2)^2}{|\xi|^4+1}\rd\xi\right|<\infty
\end{align}
since the latter implies boundedness of the kernel of $\chi_2(D)m_2(D)$ and hence the Hilbert-Schmidt bound
\begin{align*}
\|A\chi_2(D)m_2(D)B\|_{\mathfrak{S}^{2}}\leq C \|A\|_{2q}\|B\|_{2q}.
\end{align*}
To prove \eqref{eq. cancellation} fix $x\in\R^2\setminus\{0\}$ and choose polar coordinates $\xi=r\e^{\I\theta}$ where $\theta$ is the angle between $\xi$ and $x$. The integral in \eqref{eq. cancellation} then becomes
\begin{align*}
\int_{0}^{\infty}\left(\int_0^{2\pi}\e^{\I r|x|\cos(\theta)}\e^{2\I\theta}\rd\theta\right)\frac{r^3}{r^4+1}\rd r.
\end{align*}
Integration by parts shows that the $\theta$-integral is $\mathcal{O}(r|x|)$ for $r|x|\leq 1$. Stationary phase estimates yield an $\mathcal{O}((r|x|)^{-1/2})$ bound For $r|x|>1$. Splitting the $r$-integral into a part where $r\leq |x|^{-1}$ and a part where $r>|x|^{-1}$ then yields \eqref{eq. cancellation}.
\end{proof}

\subsection{Resolvent estimates for $D_{\rm trig}$}

In the following we consider the symbol $P$, given by \eqref{def. P}, as well as the curve (the ``Fermi surface")
$$M_{\lambda}=\set{\xi\in\R^2}{P(\xi)=\lambda}.$$

\begin{lemma}\label{lemma critical points bilayer}
The set of critical values of $P$ is $\{0,1/16\}$. The set of critical points is $\mathcal{C}=\mathcal{C}_{0}\cup \mathcal{C}_{1}$, where
\begin{equation}\label{def. critical points}
\begin{split}
\mathcal{C}_{0}&=P^{-1}(\{0\})\cap\mathcal{C}=\{0,e^{\I\pi/3},e^{\I\pi},e^{\I 5\pi/3}\},\\
\mathcal{C}_{1}&=P^{-1}(\{1/16\})\cap\mathcal{C}=\{e^{\I\pi/3}/2,e^{\I\pi}/2,e^{\I 5\pi/3}/2\}.
\end{split}
\end{equation} 
All critical points are non-degenerate. In fact, the points in $\mathcal{C}_0$ are minima, and the points in $\mathcal{C}_1$ are saddle points. $M_{\lambda}$ is compact for all $\lambda\in\R_+\setminus \{0,1/16\}$, has four connected components for $0<\lambda<1/16$ and is connected for $\lambda>1/16$. 
\end{lemma}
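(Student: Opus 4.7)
The plan is to locate critical points by passing to polar coordinates $\xi = re^{\I\theta}$, in which
\[
P(r,\theta) = r^4 + 2r^3\cos(3\theta) + r^2.
\]
Then $\partial_\theta P = -6r^3\sin(3\theta)$ vanishes precisely when $r = 0$ or $\sin(3\theta) = 0$, and for $r > 0$ the equation $\partial_r P = 2r(2r^2 + 3r\cos(3\theta) + 1) = 0$ reduces to $2r^2 + 3r\cos(3\theta) + 1 = 0$. For $\cos(3\theta) = +1$ both roots $-1,-1/2$ are negative and hence discarded; for $\cos(3\theta) = -1$, i.e.\ $\theta \in \{\pi/3, \pi, 5\pi/3\}$, the roots $r = 1/2$ and $r = 1$ give the six nonzero critical points listed in \eqref{def. critical points}. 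Adding the critical point at the origin and substituting yields the two critical values $0$ and $1/16$.

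For the Morse classification I would revert to Cartesian coordinates, in which $P(\xi_1,\xi_2) = (\xi_1^2 + \xi_2^2)^2 + 2(\xi_1^3 - 3\xi_1\xi_2^2) + \xi_1^2 + \xi_2^2$, and compute
\[
\Hess P = \begin{pmatrix} 12\xi_1^2 + 4\xi_2^2 + 12\xi_1 + 2 & 8\xi_1\xi_2 - 12\xi_2 \\ 8\xi_1\xi_2 - 12\xi_2 & 4\xi_1^2 + 12\xi_2^2 - 12\xi_1 + 2 \end{pmatrix}.
\]
At the representatives $0$ and $(-1,0)$ of $\mathcal{C}_0$ the Hessian is diagonal with entries $(2,2)$ and $(2,18)$, both positive definite, while at $(-1/2,0) \in \mathcal{C}_1$ it is diagonal with entries $(-1,9)$, hence indefinite. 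The remaining orbits of critical points are handled by the $2\pi/3$ rotational symmetry of $P$, which is manifest in the polar expression and preserves both nondegeneracy and Morse index.

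For the topology of $M_\lambda$, compactness for $\lambda > 0$ follows from the properness of $P$, since $P(\xi) = |\xi|^4(1 + \mathcal{O}(|\xi|^{-1}))$ as $|\xi| \to \infty$. For the component count I would apply Morse theory to the sublevel sets $\{P \leq \lambda\}$. For $0 < \lambda < 1/16$ the only critical values below $\lambda$ are the four minima at level $0$, so $\{P \leq \lambda\}$ deformation retracts onto the four minima and consists of four disjoint topological disks; its boundary $M_\lambda$ is four circles. For $\lambda > 1/16$ three $1$-handles are attached at the saddles; the explicit factorization $P(t,0) = t^2(t+1)^2$ shows that the saddle $(-1/2,0)$ lies on the segment from $0$ to $(-1,0)$, so by the $2\pi/3$ symmetry each saddle joins the origin to one of the three outer minima. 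The resulting sublevel set has Euler characteristic $4 - 3 = 1$ and is connected, so $M_\lambda$ is a single circle. The one step that is not pure computation is identifying which pairs of minima the saddles link; the one-dimensional restriction to the three symmetry axes settles this cleanly.
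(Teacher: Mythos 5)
Your proof is correct and follows the same strategy as the paper: pass to polar coordinates to find and solve the critical-point equations, classify by the Hessian, get compactness from properness of $P$, and count components of $M_\lambda$ via Morse theory on the sublevel sets. You fill in details the paper leaves implicit (the explicit Cartesian Hessian at the representative critical points, the factorization $P(t,0)=t^2(t+1)^2$ pinning down which pairs of minima the saddles connect), but the route is the same.
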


\begin{proof}
In polar coordinates $\xi=r\e^{\I\theta}$ we have 
\begin{align*}
\widetilde{P}(r,\theta):=P(r\e^{\I\theta})=r^4+2r^3\cos(3\theta)+r^2
\end{align*}
The $(r,\theta)$-gradient is
\begin{align*}
\nabla_{(r,\theta)} \widetilde{P}(r,\theta)
=\left(4r^3+6r^2\cos(3\theta)+2r\right)e_r+\left(-6r^2\sin(3\theta)\right)e_{\theta}.
\end{align*}
It vanishes whenever
\begin{align}\label{critical points}
r=0,\quad \mbox{or}\quad (\sin(3\theta)=0\mbox{  and  } 4r^3-6r^2+2r=0),
\end{align}
where we used that $\cos(3\theta)=\pm 1$ whenever $\sin(3\theta)=0$, and that the equation $\nabla_{(r,\theta)} \widetilde{P}(r,\theta)=0$ with $\cos(3\theta)=1$ does not have a solution $r>0$. The angles where $\cos(3\theta)=-1$ are $\theta_j=\pi(1+2j)/3$, $j=0,1,2$.
The nonzero solutions of the rightmost equation in \eqref{critical points} are $r=1$ and $r=1/2$. 
One easily checks that the critical values are $0$ and $1/16$ and that the Hessian is nondegenerate at the critical points. 
Compactness of $M_{\lambda}$ follows from the fact that $P(\xi)\to+\infty$ as $|\xi|\to+\infty$. The claim about the number of connected components follows from Morse theory.  
\end{proof}

\begin{lemma}\label{lemma k=4}
For any $\lambda\in\R\setminus\{0,1/16\}$ the curve $M_{\lambda}$ is of finite type $k=4$.
\end{lemma}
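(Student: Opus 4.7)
The plan is as follows. First I would reduce to $\lambda > 0$: the identity $P(\xi) = |\xi^2 + \bar\xi|^2$ (verified by direct expansion with $\xi = x_1 + \I x_2$) shows $P \geq 0$, so $M_\lambda = \emptyset$ for $\lambda < 0$. For $\lambda \in (0,\infty) \setminus \{1/16\}$, Lemma~\ref{lemma critical points bilayer} gives $\nabla P \neq 0$ along $M_\lambda$, hence $M_\lambda$ is a smooth compact regular level curve. The curvature of such a level set takes the standard form
\begin{align*}
\kappa = \frac{Q(\xi)}{|\nabla P(\xi)|^3}, \qquad Q := -P_{11}P_2^2 + 2P_{12}P_1P_2 - P_{22}P_1^2,
\end{align*}
and since $P$ is polynomial, so is $Q$. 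In particular $\kappa$ is real-analytic along $M_\lambda$.

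Because $M_\lambda$ is a compact real-analytic curve and $\kappa$ is real-analytic on it, either $\kappa$ vanishes identically on a component or it has only finitely many zeros, each of finite order. The first alternative is excluded by evaluating $Q$ at a single convenient point of each component. This already gives finite type; the nontrivial content is the uniform bound $k \leq 4$. To obtain it, I would pass to polar coordinates and use the expression $\widetilde{P}(r,\theta) = r^2(r^2 + 2r\cos(3\theta) + 1)$ to parametrize $M_\lambda$ locally as $r = r(\theta)$. Substituting $r', r''$ (obtained from implicit differentiation of $\widetilde{P}(r(\theta),\theta) = \lambda$) into the polar curvature formula $\kappa = (r^2 + 2(r')^2 - rr'')/(r^2 + (r')^2)^{3/2}$ presents $\kappa$ as an explicit rational function of $(r,\theta)$. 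The $C_{3v}$-symmetry of $\widetilde{P}$ (invariance under $\theta \mapsto \theta + 2\pi/3$ and under $\theta \mapsto -\theta$) reduces the analysis to one angular sector of width $\pi/3$, and symmetry automatically forces the tangential derivative $\kappa'$ to vanish at any zero of $\kappa$ lying on a reflection axis.

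The main obstacle is then to show that $\kappa, \kappa', \kappa''$ never vanish simultaneously on $M_\lambda$, i.e., that the algebraic curves $\{P = \lambda\}$ and $\{Q = 0\}$ do not have third-order contact. For an axial zero this reduces to a single scalar computation of $\kappa''$ in terms of the partial derivatives of $\widetilde{P}$ evaluated at $(r, \theta_*)$, combined with the observation that $\kappa'$ vanishes for free by symmetry; for putative off-axis zeros, one shows that a suitable resultant (in $r$, or equivalently in $\cos(3\theta)$) does not vanish as long as $\lambda \notin \{0, 1/16\}$. Both checks are finite polynomial manipulations, and I expect the off-axis case to be the technically heaviest step.
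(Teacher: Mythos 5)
Your plan takes a genuinely different---and substantially heavier---route than the paper's. The paper's proof is a short degree-counting argument: after a rotation and translation write $M_\lambda$ locally as a graph $\xi_1 = h(\xi_2)$ with $h(0) = h'(0) = 0$; if in addition $h''(0) = h'''(0) = h^{(4)}(0) = 0$, then implicit differentiation of $P(h(\xi_2),\xi_2)=\lambda$ forces $\partial_{\xi_2}^k P(0) = 0$ for $k=1,\ldots,4$, so (because $\deg P = 4$) the restriction $t\mapsto P(te_2)$ is \emph{constant}; but the degree-four homogeneous part of $P$ is $|\xi|^4$, which is rotation-invariant, so after undoing the rotation and translation that restriction always has $t^4$-coefficient equal to $1$---a contradiction. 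This works uniformly in $\lambda$ with no curvature computation at all, and it makes visible the structural reason for the bound $k\le 4$: $P$ has degree $4$ and its leading form is positive definite, so $P$ restricted to any line cannot vanish to order higher than $3$.

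Your proposal instead computes $\kappa = Q/|\nabla P|^3$ explicitly and tries to rule out a simultaneous zero of $\kappa,\kappa',\kappa''$ along $M_\lambda$ via symmetry (on the three reflection axes) and a resultant (off-axis). The off-axis step is the one that carries the whole weight, and you do not carry it out: you would have to exhibit the relevant elimination polynomial in $(\lambda, r, \cos 3\theta)$ and verify that it has no real solutions for any $\lambda\notin\{0,1/16\}$. It is not a priori clear that this holds without exceptional values of $\lambda$, and if it failed at some isolated $\lambda_0$ you would need a separate argument there. (A small simplification you could have used: since $|\nabla P|\neq 0$ on $M_\lambda$, the order of vanishing of $\kappa$ along $M_\lambda$ equals that of the polynomial $Q$, so the $3/2$-power in the denominator is harmless; but this does not remove the main computational burden.) As written, this is a plan whose decisive lemma is unproved, whereas the paper's argument closes the gap in two lines by exploiting the degree bound directly.
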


\begin{proof}
After rotation and translation we may assume that $M_{\lambda}$ is locally a graph of a smooth function $h:U\to\R$ over the $\xi_2$ axis such that $h(0)=h'(0)=0$. By Remark \ref{rem. h} it suffices to show that
\begin{align}\label{Pol. of deg. 4}
\sum_{k=2}^4|h^{(k)}(0)|\neq 0.
\end{align}
Assume the right hand side of \eqref{Pol. of deg. 4} is zero. Repeatedly differentiating the identity $P(h(\xi_2),\xi_2)=\lambda$ at the point $\xi_2=0$ yields
\begin{align}\label{derivatives of h vanish}
0=h^{(k)}(0)=-\frac{\partial_{\xi_2}^{(k)}P(0)}{\partial_{\xi_1}P(0)},\quad k=1,\ldots,4.
\end{align}
Since $P$ is a polynomial of degree four this implies that $t\mapsto P(te_2)$ is constant. However, for any rotation matrix $R$ and for any $a\in\R^2$, the $t^4$-coefficient of the polynomial $t\mapsto P(tRe_2+a)$ is equal to $1$. Undoing the translation and rotation, we arrive at a contradiction.
\end{proof}

\begin{remark}\label{remark optimality of k=4 for trig}
It may be seen by explicit computation that $k=4$ is optimal in general. Indeed, changing variables to $\xi_1=\frac{1}{2}(3-\sqrt{7})+s$, $\xi_2=t$, one finds that
\begin{align*}
P(\frac{1}{2}(3-\sqrt{7})+s,t)=\frac{233}{4}-22\sqrt{7}+t^4+(72-27\sqrt{7}+\mathcal{O}(t^2))s+\mathcal{O}(s^2).
\end{align*} 
Then, for $\lambda:=\frac{233}{4}-22\sqrt{7}$ and $n=1,2,3$, we have
\begin{align*}
P(\frac{1}{2}(3-\sqrt{7}),0)=\lambda,\quad \partial_s P(\frac{1}{2}(3-\sqrt{7}),0)\neq 0,\quad \partial_t^nP(\frac{1}{2}(3-\sqrt{7}),0)=0,
\end{align*}
and thus, by a parallel argument as that leading to \eqref{derivatives of h vanish}, the curvature of $M_{\lambda}$ vanishes to second order at $\xi=(\frac{1}{2}(3-\sqrt{7}),0)$.   
\end{remark}

We remind the reader that $\alpha_{q,r,d}\in (1,\infty)$ is defined in \eqref{def. alpha_qrd}.

\begin{proposition}\label{prop. resolvent Schatten trig. warp. critical points}
Let $1\leq q\leq 3/2$ and let $\mathcal{C}$ be given by \eqref{critical points}. There exist $\delta_1>0$ and $C_q^{(1)}>0$ such that for fixed $\chi_1\in C_c^{\infty}(B_{\delta_1}(\mathcal{C}))$ the following holds: For any $A,B\in L^{2q}(\R^2,\mathrm{Mat}(2\times2;\C))$ and $z\in\rho(D_{\rm trig})$ we have the inequality
\begin{align}\label{eq. resolvent Schatten trig. warp. critical points}
\|A\chi_1(D)(D_{\rm trig}-z)^{-1}B\|_{\mathfrak{S}^{\alpha_{q,1/2,2}}}\leq C_q^{(1)}N_q(z)\|A\|_{2q}\|B\|_{2q},
\end{align}
where $N_q:\C\setminus\{0,1/16\}\to\R_+$ is a continuous function satisfying
\begin{align}\label{def Nq}
N_q(z)=\begin{cases}
|z|^{\frac{1}{q}-1}&\quad\mbox{if  }|z|\leq 1/64,\\
\upsilon(z)|z-1/16|^{\frac{1}{q}-1}&\quad\mbox{if  }|z-1/16|\leq 1/64.
\end{cases}
\end{align} 
with $\upsilon(z)=1$ or $\upsilon(z)=-\ln|z-1/16|$ according to whether $q>1$ or $q=1$.
\end{proposition}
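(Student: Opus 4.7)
\emph{Proof plan.} The strategy is to reduce the matrix resolvent $(D_{\rm trig}-z)^{-1}$ to a scalar Fourier multiplier via the identity
$(D_{\rm trig}-z)^{-1}=(D_{\rm trig}+z)(P(D)-z^2)^{-1}$
coming from $D_{\rm trig}^2 = P(D)\mathbf{1}_{2\times 2}$, then to localize around each critical point of $P$ by a partition of unity, and finally to apply the scalar Schatten-resolvent bound from the appendix (Proposition \ref{prop. schatten estimate low frequency critical point}) after a Morse change of variables.

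First, since $\chi_1$ is compactly supported, $(D_{\rm trig}+z)\chi_1(D)$ is a matrix-valued Fourier multiplier with a $C_c^\infty$ symbol whose $L^2$-operator norm is uniformly bounded on any bounded subset of $z$-values. By H\"older in Schatten spaces and the obvious ideal property $\|TS\|_{\mathfrak{S}^\alpha}\leq \|T\|_{\infty}\|S\|_{\mathfrak{S}^\alpha}$, it therefore suffices to prove \eqref{eq. resolvent Schatten trig. warp. critical points} with $(D_{\rm trig}-z)^{-1}$ replaced by the scalar $(P(D)-z^2)^{-1}$ and $\chi_1$ by a slightly enlarged cutoff $\widetilde\chi_1\in C_c^\infty(B_{\delta_1}(\mathcal{C}))$ with $\widetilde\chi_1\equiv 1$ on $\supp\chi_1$.

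Second, for $\delta_1>0$ small enough the balls $\{B_{\delta_1}(\xi_0)\}_{\xi_0\in\mathcal{C}}$ are pairwise disjoint and Morse's lemma applies in each of them. Introducing a smooth partition of unity $\widetilde\chi_1=\sum_{\xi_0\in\mathcal{C}}\chi^{\xi_0}$ with $\chi^{\xi_0}\in C_c^\infty(B_{\delta_1}(\xi_0))$ reduces matters to bounding each piece $A\chi^{\xi_0}(D)(P(D)-z^2)^{-1}B$ separately. By Lemma \ref{lemma critical points bilayer} every $\xi_0\in\mathcal{C}$ is a non-degenerate critical point of $P$, so Morse's lemma provides a local $C^\infty$ diffeomorphism $\Phi_{\xi_0}$ with $\Phi_{\xi_0}(\xi_0)=0$ such that $P(\xi)=P(\xi_0)+Q_{\xi_0}(\Phi_{\xi_0}(\xi))$, where $Q_{\xi_0}$ is a positive-definite quadratic form when $\xi_0\in\mathcal{C}_0$ and an indefinite one when $\xi_0\in\mathcal{C}_1$.

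Third, performing the change of variables $\eta=\Phi_{\xi_0}(\xi)$ in the Fourier representation, which introduces only a smooth bounded Jacobian and smoothly-cutoff lower-order corrections that are absorbed via Kato--Seiler--Simon, one reduces each localized piece to the standard form $\psi(D)(Q_{\xi_0}(D)-\mu_{\xi_0}(z))^{-1}$ with $\mu_{\xi_0}(z) := z^2 - P(\xi_0)$ and $\psi\in C_c^\infty$. This is precisely the setting of Proposition \ref{prop. schatten estimate low frequency critical point} (applied with $r=1/2$ to record the quadratic vanishing of the symbol at the critical point), which delivers the Schatten-$\alpha_{q,1/2,2}$ estimate with the advertised $N_q$-type factor and, in the saddle case, the logarithmic loss at $q=1$. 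Summing over $\xi_0\in\mathcal{C}_0$ yields the bound $N_q(z)$ when $|z|\leq 1/64$ (so $|\mu_{\xi_0}|\asymp |z|^2$ and the appendix exponent converts to $|z|^{1/q-1}$), and summing over $\xi_0\in\mathcal{C}_1$ yields the bound when $|z-1/16|\leq 1/64$ (where $|\mu_{\xi_0}|=|z^2-1/16|\asymp |z-1/16|$). For $z$ outside these two regimes, the critical points of $P$ are at positive distance from the Fermi surface and the estimate follows immediately from the Kato--Seiler--Simon inequality; a suitable continuous $N_q$ interpolating between the two regimes thus exists.

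The main technical obstacle is the saddle case $\xi_0\in\mathcal{C}_1$ at the endpoint $q=1$: after Morse reduction the level sets $\{Q_{\xi_0}=\mu\}$ are pairs of hyperbolas with asymptotic directions of vanishing curvature, and the curvature-based arguments underlying Proposition \ref{prop. schatten estimate low frequency critical point} lose a logarithm at this endpoint. This is precisely the origin of the factor $\upsilon(z)=-\ln|z-1/16|$, which our approach cannot remove and which accounts for the weaker conclusion in part (iii) of Theorem \ref{thm. 2}.
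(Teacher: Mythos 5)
Your high-level strategy — the resolvent identity $(D_{\rm trig}-z)^{-1}=(D_{\rm trig}+z)(P(D)-z^2)^{-1}$, a partition of unity isolating each critical point, and an appeal to the appendix resolvent bound near a nondegenerate critical point — is exactly the paper's route, and the first two steps are fine. However, there is a genuine problem with the third step as you have written it. Proposition~\ref{prop. schatten estimate low frequency critical point} is stated for a \emph{general} smooth $T$ with a nondegenerate critical point, so it applies directly with $T=P$, $\chi=\psi_j$, $z\mapsto z^2$, $\lambda_{\rm c}=P(\xi_0)$, and Lemma~\ref{lemma critical points bilayer} supplies the nondegeneracy. No Morse straightening is needed; the appendix proof handles the cubic-and-higher terms internally by Taylor expansion and a $\lambda^{1/2}$-rescaling, precisely so one does not have to change variables. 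Your proposed ``change of variables $\eta=\Phi_{\xi_0}(\xi)$ in the Fourier representation'' is not a legitimate reduction: $\Phi_{\xi_0}$ is a nonlinear diffeomorphism of frequency space, and conjugating a Fourier multiplier by such a map does not produce another Fourier multiplier, nor can the effect be absorbed by ``a smooth bounded Jacobian'' or the Kato--Seiler--Simon inequality. The latter controls operators of the form $f(x)g(D)$; it cannot be used to trade a nonlinear warping of $\xi$-space for a harmless error term, because the resulting operator no longer has the structure $A\,\text{(multiplier)}\,B$ that the Schatten estimates require. In short, the Morse-lemma detour is both unnecessary and, as argued, unjustified.

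Two further inaccuracies are worth flagging. First, you assert ``the appendix exponent converts to $|z|^{1/q-1}$'': applying Proposition~\ref{prop. schatten estimate low frequency critical point} with spectral parameter $z^2$ and $\lambda_{\rm c}=0$ literally produces $|z^2|^{\frac{1}{q}-1}=|z|^{\frac{2}{q}-2}$, a different exponent; to arrive at $|z|^{\frac{1}{q}-1}$ one needs to exploit that, near the zero of the symbol of $D_{\rm trig}$ at $\xi_0\in\mathcal{C}_0$, the factor $(D_{\rm trig}+z)$ is small on the frequency scale where the resolvent of $P(D)$ is actually singular — this is a real step and you assert it without argument. Second, your claim ``$|z^2-1/16|\asymp |z-1/16|$'' for $z$ near $1/16$ is false, since $z^2-1/16=(z-1/4)(z+1/4)$ is bounded away from zero there; the singular values of $z$ coming from the saddle points are $z=\pm 1/4$, not $z=1/16$, so this conversion cannot be taken for granted either.
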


\begin{proof}
We cut up the support of $\chi_1$ by a finite partition of unity $1=\sum_j\psi_j$ such that the support of each $\psi_j$ contains exactly one critical point; this is possible by choosing $\delta_1$ sufficiently small. The claim thus follows from Lemma \ref{lemma critical points bilayer} and Proposition \ref{prop. schatten estimate low frequency critical point}, together with the identity \eqref{resolvent identity trig. warp.}.
\end{proof}

\begin{proposition}\label{prop. resolvent Schatten trig. warp. infinity}
Let $1\leq q\leq 3/2$ and $\chi_2\in C^{\infty}(\R^d)$. There exist $\delta_2>0$ and $C_q^{(2)}>0$ such that for any $A,B\in L^{2q}(\R^2,\mathrm{Mat}(2\times2;\C))$ we have the inequality
\begin{align}\label{eq. resolvent Schatten trig. warp. infinity}
\|A\chi_2(D)(D_{\rm trig}-z)^{-1}B\|_{\mathfrak{S}^{\max(q,\alpha_{q,1/2,2})}}\leq C_q^{(2)}(1+|z|)^{\frac{1}{q}-1}\|A\|_{2q}\|B\|_{2q}
\end{align}
uniformly in $z\in\rho(D_{\rm trig})$ provided 
\begin{align*}
|z|+\inf\set{|\xi|}{\xi\in\supp\chi_2}\geq 1/\delta_2.
\end{align*}
\end{proposition}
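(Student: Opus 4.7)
The plan is to follow closely the template of Proposition \ref{prop. resolvent Schatten no trig. warp.}. First, I would invoke the factorization $(D_{\rm trig}-z)^{-1} = (D_{\rm trig}+z)(P(D)-z^2)^{-1}$ (the identity \eqref{resolvent identity trig. warp.} used in the preceding proof) to reduce matters to scalar Fourier multipliers $m(\xi)=q(\xi)/(P(\xi)-z^2)$, where $q$ is a scalar symbol bounded by $C(1+|z|+|\xi|^2)$. The hypothesis $|z|+\inf\{|\xi|:\xi\in\supp\chi_2\}\geq 1/\delta_2$ then splits naturally into two regimes: (a) $|z|$ bounded and $\supp\chi_2\subset\{|\xi|\geq R\}$ with $R$ large; (b) $|z|$ large, with $\chi_2$ arbitrary.

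In case (a), since $P(\xi)/|\xi|^4\to 1$ as $|\xi|\to\infty$, we have $|P(\xi)-z^2|\gtrsim |\xi|^4$ on $\supp\chi_2$ provided $\delta_2$ is sufficiently small, so the symbol $\chi_2 m$ decays at least like $|\xi|^{-2}$. For $q>1$ the Kato-Seiler-Simon inequality \cite[Theorem 4.1]{MR2154153} then gives $\|A\chi_2(D)m(D)B\|_{\mathfrak{S}^q}\leq C\|A\|_{2q}\|B\|_{2q}$ via $\|\chi_2 m\|_q<\infty$. For $q=1$, the same stationary phase argument as around \eqref{eq. cancellation} (polar decomposition in frequency, integration by parts in the angular variable, stationary phase in the radial one) yields a bounded integral kernel and hence a Hilbert-Schmidt bound, with explicit use of the fact that the dominant term of $P$ is $|\xi|^4$. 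The factor $(1+|z|)^{1/q-1}$ is bounded from below in this regime, so the constant is easily absorbed.

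In case (b), I would perform the $L^2$-unitary rescaling $\xi=|z|^{1/2}\eta$ (conjugation by the dilation $(U_\lambda f)(x)=\lambda f(\lambda x)$ with $\lambda=|z|^{-1/2}$), under which Schatten norms are invariant. Writing $\omega_z:=z^2/|z|^2\in S^1\setminus\{1\}$, a direct computation shows $(P(|z|^{1/2}\eta)-z^2)/|z|^2=Q_z(\eta)$, where
\begin{equation*}
Q_z(\eta):=|\eta|^4-\omega_z+2|z|^{-1/2}\re(\eta^3)+|z|^{-1}|\eta|^2,
\end{equation*}
so the rescaled Fourier multiplier reads $|z|^{-1}\widetilde q(\eta)/Q_z(\eta)$ with $\widetilde q$ of degree $\leq 2$ bounded uniformly in $z$. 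For $|z|$ large the dominant part of $Q_z$ is $|\eta|^4-\omega_z$; as $z$ approaches the real axis, $\omega_z\to 1$ and the real zero set of the leading part tends to the unit circle, a smooth compact curve of type $k=2$. Hence Corollary \ref{cor. schatten estimate low frequency principal curvature} (or its finite-type counterpart Corollary \ref{cor. schatten estimate low frequency finite type} to absorb the $O(|z|^{-1/2})$ perturbations in $Q_z$) furnishes a rescaled Schatten bound uniform in $|z|\geq R_1$. Tracking powers of $|z|$ via the rescaling identity $\|A(|z|^{-1/2}\,\cdot\,)\|_{2q}=|z|^{1/(2q)}\|A\|_{2q}$ together with the $|z|^{-1}$ from the symbol yields the claimed factor $|z|^{1/q-1}\sim (1+|z|)^{1/q-1}$ on the original scale.

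The main obstacle will be the uniformity of the rescaled Schatten bound as $\omega_z$ ranges over $S^1\setminus\{1\}$: while the curvature of the limiting unit circle is non-degenerate, one must ensure that the geometric constants in the appendix results (radius of curvature, type, diameter of cutoffs) can be chosen uniformly for all $|z|\geq R_1$. This is handled by an additional partition in the rescaled frequency $\eta$ into a thin shell around the Fermi surface $\{Q_z=0\}$ and its complement: near the Fermi surface the appendix applies directly; away from it, the symbol is $O(1)$ and Kato-Seiler-Simon suffices. Finally, the low-frequency portion of $\chi_2$ (corresponding after rescaling to $|\xi|\ll |z|^{1/2}$) is treated separately using $|P(\xi)-z^2|\gtrsim |z|^2$, which gives $O(|z|^{-2})$ symbol decay and again permits a straightforward Kato-Seiler-Simon bound consistent with the target constant.
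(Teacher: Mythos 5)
Your proposal follows essentially the same route the paper intends: it invokes the factorization \eqref{resolvent identity trig. warp.} (as in Proposition \ref{prop. resolvent Schatten no trig. warp.}) and a $|z|^{1/2}$-rescaling to normalize the Fermi surface (as in Proposition \ref{prop. schatten estimate low frequency critical point}), which is precisely what the paper's two-sentence proof references. The decomposition into the regime of bounded $|z|$ with high-frequency support, handled by Kato--Seiler--Simon for $q>1$ and by the cancellation/stationary-phase argument for $q=1$, versus the regime of large $|z|$, handled by rescaling and the appendix curvature estimates near the (approximate) unit circle $\{Q_z=0\}$ plus Kato--Seiler--Simon away from it, is a valid way to organize the argument, and the power counting ($|z|^{-1}$ from the rescaled symbol times $|z|^{1/(2q)}$ from each of $\|A(|z|^{-1/2}\cdot)\|_{2q}$ and $\|B(|z|^{-1/2}\cdot)\|_{2q}$) correctly reproduces the target exponent $|z|^{1/q-1}$. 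Two points you identify but leave slightly informal would need a bit more care in a full write-up: the $q=1$ cancellation now involves a non-radial denominator $P(\xi)-z^2$ and a numerator with both an $e^{-2i\theta}$-oscillating quadratic term and a lower-order $e^{i\theta}$ term and a constant $z$ term (the last two contribute absolutely integrable symbols $O(|\xi|^{-3})$ and $O(|\xi|^{-4})$, so only the leading term needs the angular cancellation); and the uniformity in $\omega_z=z^2/|z|^2$ near $1$, which is exactly what Corollary \ref{cor. schatten estimate low frequency principal curvature} together with Lemma \ref{lemma stability curvature} supplies, since the rescaled defining function is a $C^2$-small (size $O(|z|^{-1/2})$) perturbation of $|\eta|^4-1$ on a compact neighborhood of the unit circle.
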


\begin{proof}
The proof is similar to that of \eqref{eq. resolvent Schatten}. Instead of homogeneity, one uses a rescaling argument such as that in the proof of Proposition~\ref{prop. schatten estimate low frequency critical point}. We omit the details.
\end{proof}

\begin{proposition}\label{prop. resolvent Schatten trig. warp. away from crit. points}
Let $1\leq q\leq 5/4$, $\chi_3\in C_c^{\infty}(\R^d\setminus\mathcal{C})$, where $\mathcal{C}$ is given by \eqref{critical points}.
Then there exists $C_q^{(3)}>0$ such that for any $A,B\in L^{2q}(\R^2,\mathrm{Mat}(2\times2;\C))$ and $z\in\rho(D_{\rm trig})$ we have the inequality
\begin{align}\label{eq. resolvent Schatten trig. warp. away from crit. points}
\|A\chi_3(D)(D_{\rm trig}-z)^{-1}B\|_{\mathfrak{S}^{\alpha_{q,1/4,2}}}\leq C_q^{(3)}(1+|z|)^{-1}\|A\|_{2q}\|B\|_{2q}.
\end{align}
\end{proposition}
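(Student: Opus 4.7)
I would mirror the strategy used in the two preceding resolvent estimates. The starting point is the algebraic identity
$$(D_{\rm trig}-z)^{-1}=(D_{\rm trig}+z)(P(D)-z^2)^{-1},$$
which holds because the $2\times 2$ matrix symbol $M(\xi)$ of $D_{\rm trig}$ has zero trace and determinant $-P(\xi)$, so $M(\xi)^2=P(\xi)I$ and $(M(\xi)-zI)^{-1}=(M(\xi)+zI)/(P(\xi)-z^2)$. Since $\supp\chi_3$ is a fixed compact set, the matrix-valued factor $\chi_3(\xi)(M(\xi)+zI)$ is uniformly bounded in operator norm by $C_{\chi_3}(1+|z|)$. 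Consequently, the assertion reduces to the scalar Schatten bound
$$\|A\chi_3(D)(P(D)-z^2)^{-1}B\|_{\mathfrak{S}^{\alpha_{q,1/4,2}}}\le C(1+|z|)^{-2}\|A\|_{2q}\|B\|_{2q},$$
from which \eqref{eq. resolvent Schatten trig. warp. away from crit. points} follows after absorbing one power of $(1+|z|)$ into the numerator.

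I would then split on the size of $z$. Let $R_0:=\sup\{|P(\xi)|^{1/2}:\xi\in\supp\chi_3\}+1$. For $|z|\ge 2R_0$ one has $|P(\xi)-z^2|\ge |z|^2/2$ throughout $\supp\chi_3$, so $\chi_3(\xi)(P(\xi)-z^2)^{-1}$ is a compactly supported smooth function with $L^p$ norm $\lesssim (1+|z|)^{-2}$ for every $p\in[1,\infty]$. The Kato--Seiler--Simon inequality combined with H\"older's inequality in Schatten classes then delivers the required bound in any exponent $\ge 2$, in particular in $\mathfrak{S}^{\alpha_{q,1/4,2}}$.

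For the bounded regime $|z|\le 2R_0$, the parameter $z^2$ varies over a fixed compact subset of $\C$. Because $\supp\chi_3$ is disjoint from the critical set $\mathcal{C}$ of $P$, the gradient $\nabla P$ is bounded below on $\supp\chi_3$, and Lemma \ref{lemma k=4} says that every real level curve $M_\lambda\cap\supp\chi_3$ is smooth of type at most $k=4$. This is precisely the geometric input required by the finite-type Schatten estimate from the appendix (the analogue of Corollary \ref{cor. schatten estimate low frequency finite type} that was used in the proof of Proposition \ref{prop. resolvent Schatten no trig. warp.}), with curvature parameter $r=1/k=1/4$. The hypothesis $1\le q\le 5/4$ is exactly the admissible range for this estimate at type $k=4$, as encoded in the exponent $\alpha_{q,1/4,2}$.

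The principal obstacle is securing the finite-type Schatten bound uniformly in the \emph{complex} parameter $z^2$, including the delicate situation where $z^2$ approaches or crosses the real range of $P$ on $\supp\chi_3$. This uniformity is built into the $TT^*$/limiting absorption scheme by which the appendix estimate is proved; compactness of the relevant set of spectral parameters then makes the constants uniform, after which the two regimes may be patched into a single bound.
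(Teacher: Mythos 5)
Your proposal matches the paper's proof in all essential respects: it starts from the same algebraic identity $(D_{\rm trig}-z)^{-1}=(D_{\rm trig}+z)(P(D)-z^2)^{-1}$, extracts the same $\mathcal{O}(1+|z|)$ factor from the localized matrix part, splits into large-$|z|$ (trivial smoothing/Kato--Seiler--Simon bound) and bounded-$|z|$ regimes, and in the latter invokes Lemma~\ref{lemma k=4} together with Corollary~\ref{cor. schatten estimate low frequency finite type} at type $k=4$ to get $r=1/4$ and hence the range $q\le 5/4$. The only cosmetic difference is that the paper introduces an explicit cut-off $\psi$ around $\bigcup_{\lambda^2\leq 4a}M_\lambda$ before applying the finite-type corollary, which your sketch leaves implicit but is the natural way to make the appeal to the appendix rigorous.
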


\begin{proof}
Since $D_{\rm trig}^2=P(D)\mathbf{1}_{2\times 2}$, with $P$ given by \eqref{def. P}, we have the identity
\begin{align}\label{resolvent identity trig. warp.}
(D_{\rm trig}-z)^{-1}=(D_{\rm trig}+z)(P(D)-z^2)^{-1}.
\end{align}
In view of the localization $(D_{\rm trig}+z)$ contributes only a factor $\mathcal{O}((1+|z|))$ to the estimate. It is thus sufficient to prove that
\begin{align*}
\|Am(D)B\|_{\mathfrak{S}^{\alpha_q}}\leq C\|A\|_{2q}\|B\|_{2q},
\end{align*}
where
\begin{align*}
m(\xi):=\frac{(1+|z|^2)\chi_3(\xi)}{P(\xi)-z^2}
\end{align*}
Define
\begin{align*}
a:=\max_{\xi\in\supp\chi_3}P(\xi)<+\infty.
\end{align*}
The estimate is trivial for $|z|^2>2a$ since $m(D)$ is then a smoothing operator with uniform bounds in $z$. Assume now that $|z|^2\leq 2a$, and let $\psi\in C_c^{\infty}(\R^2;[0,1])$ be such that 
\begin{align*}
\psi\equiv 1\quad\mbox{on}\quad\bigcup_{\lambda^2\leq 4a}M_{\lambda}
\end{align*}
Then $(1-\psi(D))m(D)$ is a smoothing operator with uniform bounds in $|z|\leq 2a$, and the estimate is again trivial for this part. The estimate for the remaining part $\psi(D)m(D)$ follows from Lemma~\ref{lemma k=4} and Corollary~\ref{cor. schatten estimate low frequency finite type} with $k=4$. 
\end{proof}

\section{Proofs of Theorems \ref{thm. 1}--\ref{thm. 2}}

\begin{proof}[Proof of Theorem \ref{thm. 1}]
Let $A:=|V|^{1/2}$, $B:=U|V|^{1/2}$ where $U$ is the partial isometry in the polar decomposition of $V$. By Proposition \ref{prop. resolvent Schatten no trig. warp.} and the Birman-Schwinger principle it follows that for $z\in\sigma_{\rm p}(D)\setminus\sigma(D_m)$ we have
\begin{align*}
1\leq \|A(D_m-z)^{-1}B\|\leq C|k(z)|^{\frac{2}{q}-2}(|\zeta(z)|+|\zeta(z)|^{-1})\|V\|_{q}.
\end{align*} 
This immediately yields \eqref{eq. bound thm. 1 i)}.\footnote{Strictly speaking, we have only proved \eqref{eq. bound thm. 1 i)} for $z\notin\sigma(D_0)$. However, embedded eigenvalues can be included by \cite[Proposition 3.1]{MR3713021}. This applies to the proof of Theorem \ref{thm. 2} as well.} The case $z\in\sigma_{\rm p}(D)$ follows from \cite[Proposition 3.1]{MR3713021}.
One obtains \eqref{eq. bound thm. 1 ii)} from \eqref{eq. bound thm. 1 i)} by solving for $|\zeta(z)|$ and Taylor expanding in $\|V\|_1$. This kind of computation may be found in the proof of Theorem 2.1 in~\cite{MR3177918} and is therefore omitted. The claim $m=0$ is obvious since $|\zeta(z)|=1$ in this case.
\end{proof}

\begin{proof}[Proof of Theorem \ref{thm. 2}]
Let $\mathcal{C}$ be the set of critical points of $P$, see \eqref{critical points}. Let $C_q^{(1)},C_q^{(2)},C_q^{(3)}$, $\delta_1,\delta_2$ be as in Propositions \ref{prop. resolvent Schatten trig. warp. critical points}--\ref{prop. resolvent Schatten trig. warp. away from crit. points} and set $\delta:=\min(\delta_1,\delta_2)$. Let $z\in \sigma_{\rm p}(D)\cap \rho(D_{\rm trig})$. Proposition \ref{prop. resolvent Schatten trig. warp. infinity} with $\chi_2\equiv 1$ and the Birman-Schwinger principle imply that either $|z|\leq \delta^{-1}$ or that $1\leq C_q^{(2)} |z|^{\frac{1}{q}-1}\|V\|_q$.
Hence,
\begin{align*}
|z|^{q-1}\leq \max(\delta^{1-q},(C_q^{(2)})^q\|V\|_q^q),
\end{align*}
proving \eqref{spectral inclusion trig. warp. 1}. To prove~\eqref{spectral inclusion trig. warp. 2}, we may assume that 
\begin{align}\label{Proof trig warp. 0}
\dist(z,\{0,1/16\})\leq \min(1/64,(C_q^{(1)}/C_q^{(2)})^{q/(q-1)}),
\end{align}
otherwise the claim follows from \eqref{spectral inclusion trig. warp. 1}.
pick a partition of unity $1=\chi_1+\chi_2+\chi_3$ subordinate to the decomposition
\begin{align*}
\R^2=B_{\delta}(\mathcal{C})\cup (\R^2\setminus B_{1/\delta}(0))\cup (B_{1/\delta}(0)\setminus B_{\delta}(\mathcal{C})).
\end{align*}
Birman-Schwinger together with Propositions \ref{prop. resolvent Schatten trig. warp. critical points}--\ref{prop. resolvent Schatten trig. warp. away from crit. points} yields
\begin{align}\label{Proof trig warp. 1}
1\leq \sum_{j=1}^3\|A(D_{\rm trig}-z)^{-1}B\|\leq (C_q^{(1)}N_q(z)+C_q^{(2)})\|V\|_q+C_q^{(3)}c_1,
\end{align}
where $N_q(z)$ is defined in \eqref{def Nq}. By \eqref{Proof trig warp. 0} we have $N_q(z)\geq C_q^{(2)}/C_q^{(1)}$, hence, if $c_1<(C_q^{(3)})^{-1}$, then
\begin{align}\label{Proof trig warp. 2}
N_q(z)^{-q}\leq \left(\frac{2C_q^{(2)}}{1-C_q^{(3)}c_1}\right)^q\|V\|_q^q,
\end{align}
which implies \eqref{spectral inclusion trig. warp. 2}.
If $q=1$, one has to multiply $N_q(z)$ by $-\ln|z-1/16|$ in \eqref{Proof trig warp. 1} to obtain \eqref{spectral inclusion trig. warp. 3} analogously.
\end{proof}

\appendix

\section{Frequency-localized resolvent estimates in $d\geq 2$}

\begin{definition}
Let $S\subset\R^d$ be a smooth hypersurface. A \emph{defining function} of $S$ is a smooth function $\rho:\R^d\to\R$ such that $S=\set{\xi\in\R^d}{\rho(\xi)=0}$ and $|\nabla\rho|> 0$ on $S$. If in addition $|\nabla\rho|=1$ on $S$, we call $\rho$ a \emph{normalized defining function} of $S$. 
\end{definition}

\begin{definition}
Let $S\subset\R^d$ be a smooth hypersurface with normalized defining function $\rho:\R^d\to\R$. The second fundamental form of $S$ at $\xi_0\in S$ is the restriction of $\nabla^2\rho(\xi_0)$ to the tangent space $T_{\xi_0}S\subset\R^d$, and its eigenvalues are the principal curvatures at $\xi_0$. The Gaussian curvature is the product of all principal curvatures.  
\end{definition}

\begin{definition}
A smooth hypersurface $S\subset\R^d$ is called of finite type at $\xi_0\in S$ if at least one of its principal curvatures does not vanish to infinite order at $\xi_0$.
\end{definition}

\begin{remark}
In analogy to Remark \ref{rem. h}, if $S$ is locally the graph of a smooth function $h:U\to \R$ defined near the origin and with $h(0)=0$, $\nabla h(0)=0$, then the type $k$ of $S$ at the origin equals the smallest $k\in\N_{\geq 2}$ such that $\partial^{\alpha} h(0)\neq 0$ for some $\alpha\in\N_0^{d-1}$ of length $|\alpha|\leq k$, see \cite[VII.3.2]{MR1232192}.
\end{remark}

\begin{proposition}\label{prop. schatten estimate low frequency}
Let $S\subset\R^d$ be a smooth hypersurface with normalized defining function $\rho:\R^d\to\R$.
Fix $\chi\in C^{\infty}(\R^d)$, and assume that 
\begin{align}\label{assp. decay of the FT}
\sup_{x\in\R^d}(1+|x|)^r|\widehat{\chi\rd\sigma_S}(x)|<\infty
\end{align} 
for some $r>0$, where $\rd\sigma_S$ is the canonical surface measure on $S$.
Let $1\leq q\leq 1+r$ and define 
\begin{align}\label{def. alpha_qrd}
\alpha_{q,r,d}:=\begin{cases}
\frac{2(d-1-r)q}{d-q},\quad &\mbox{if  }\frac{d}{d-r}\leq q\leq 1+r,\\
\frac{2rq+}{2rq-d(q-1)},\quad &\mbox{if  }1\leq q<\frac{d}{d-r}.
\end{cases}
\end{align}
Here, $2rq+$ means $2rq+\varepsilon$ with $\varepsilon>0$ arbitrarily small but fixed. Then for all $A,B\in L^{2q}(\R^2)$ and all $z\in\C\setminus\R$ we have the estimate
\begin{align}\label{inequality Schatten low frequency}
\|A\chi(D)(\rho(D)-z)^{-1}B\|_{\mathfrak{S}^{\alpha_{q,r,d}}}\leq C\|A\|_{2q}\|B\|_{2q},
\end{align}
with a constant $C$ independent of $z,A,B$.  
\end{proposition}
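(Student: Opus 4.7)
The plan is to reduce Proposition~\ref{prop. schatten estimate low frequency} to a uniform layerwise Stein--Tomas estimate combined with a Frank--Sabin type duality, followed by a complex interpolation against the Cauchy kernel. First I would use the coarea decomposition
\[
\frac{\chi(\xi)}{\rho(\xi)-z} \;=\; \int_{\R}\frac{\chi(\xi)\,\delta(\rho(\xi)-\lambda)}{\lambda-z}\,\rd\lambda.
\]
Since $\rho$ is a normalized defining function of $S$, for $|\lambda|$ small the level sets $S_\lambda=\rho^{-1}(\{\lambda\})$ are smooth hypersurfaces close to $S$, and the measures $\mu_\lambda:=\chi\,\delta(\rho-\lambda)$ are simply $\chi/|\nabla\rho|$ times the surface measure on $S_\lambda$. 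The Fourier decay hypothesis~\eqref{assp. decay of the FT} is stable under this smooth perturbation (a van der Corput argument applied uniformly to the phases parametrizing $S_\lambda$), yielding $|\widehat{\mu_\lambda}(x)|\lesssim(1+|x|)^{-r}$ uniformly for $\lambda$ near $0$. For $\lambda$ bounded away from $0$, or for frequencies $\xi$ outside a fixed neighborhood of $S$, the corresponding piece of the multiplier is smoothing in $\xi$ and its Schatten norm is controlled directly by the Kato--Seiler--Simon inequality.

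Next I would convert the Fourier decay into a uniform Schatten bound on the layer operator $T_\lambda f := \widehat{\mu_\lambda}*f$. Factoring $T_\lambda = \mathcal{E}_\lambda \mathcal{R}_\lambda$, where $\mathcal{R}_\lambda$ denotes Fourier restriction to $S_\lambda$ and $\mathcal{E}_\lambda$ its adjoint, the Stein--Tomas theorem for hypersurfaces of Fourier decay rate $r$ gives $\mathcal{R}_\lambda\colon L^{p_0}(\R^d)\to L^2(S_\lambda)$ with $p_0=2(r+1)/(r+2)$. A $T^*T$ argument together with a H\"older-in-Schatten estimate in the spirit of Frank--Sabin then promotes this to
\[
\|A\,T_\lambda\,B\|_{\mathfrak{S}^{\alpha_{q,r,d}}} \;\leq\; C\,\|A\|_{2q}\|B\|_{2q}
\]
uniformly in $\lambda$ near $0$, provided $q$ lies in the Stein--Tomas range $d/(d-r)\leq q\leq 1+r$; this gives the first case of \eqref{def. alpha_qrd}. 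For $q$ below the Stein--Tomas threshold I would complex-interpolate this endpoint bound against the trivial $L^2\to L^2$ operator bound on $T_\lambda$, producing the $\varepsilon$-loss reflected in the second case of \eqref{def. alpha_qrd}.

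The last step is to reassemble the layer pieces against the Cauchy kernel $(\lambda-z)^{-1}$ and obtain a $z$-uniform bound. The naive triangle inequality $\int|\lambda-z|^{-1}\|A\,T_\lambda\,B\|_{\mathfrak{S}^{\alpha_{q,r,d}}}\,\rd\lambda$ diverges near $\lambda=\re z$, so instead I would set up an analytic family of operators in the sense of Stein, with one endpoint in $\mathfrak{S}^2$ (supplied by Kato--Seiler--Simon, with an $|\im z|$-dependent loss that becomes harmless after interpolation) and the other endpoint a Stein--Tomas type Schatten bound of the kind obtained above. Complex interpolation then averages out the Cauchy singularity and delivers \eqref{inequality Schatten low frequency} with a constant independent of $z$.

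The principal obstacle, as I see it, is this last interpolation step, since it demands precise tracking of the $z$-dependence at both endpoints with interpolation parameters that are compatible with the two cases of $\alpha_{q,r,d}$. Particular care is needed at the boundary $q=1$, where $A,B\in L^2$ only and the Kato--Seiler--Simon route is degenerate; there one must argue directly through the restriction-extension formalism to avoid a logarithmic loss.
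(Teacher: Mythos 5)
Your coarea decomposition into level sets of $\rho$, followed by a Stein--Tomas/Frank--Sabin argument on each layer and a final reassembly, is a genuinely different starting point from the paper's proof, which never decomposes into layers at all. Following \cite{MR3608659}, the paper works directly with the analytic family of complex powers $(\rho(D)-z)^{-(a+\I t)}$ for $0\leq a\leq 1+r$ and establishes the single pointwise kernel bound \eqref{eq. pointwise bound complex power of resolvent}. The key ingredient is Lemma~\ref{lemma pointwise bounds on complex powers}: in coordinates where $\rho(\xi)=e(\xi)(\xi_1-h(\xi'))$, the partial inverse Fourier transform of $(\tau\pm\I 0)^{-a-\I t}$ in the normal variable is computed explicitly via \eqref{eq: partial inverse Fourier transform of complex power} (H\"ormander's Example~7.1.17 in \cite{MR1065993}), and the remaining tangential integral is bounded by the decay hypothesis \eqref{assp. decay of the FT}. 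Combined with the Hardy--Littlewood--Sobolev inequality this produces a Hilbert--Schmidt endpoint at $a=1+r$, and one complex interpolation against the trivial $a=0$ endpoint (bounded multiplier) yields \eqref{inequality Schatten low frequency} for $d/(d-r)\leq q\leq 1+r$; the range $1\leq q<d/(d-r)$ is handled by a separate trace-norm bound on $(\rho(D)-z)^{-(b+\I t)}$, $0<b<1$, with a $(1-b)^{-1}$ loss.

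As written your proposal has two concrete gaps, and I believe filling them forces you back into the complex-powers framework. First, the ``trivial $L^2\to L^2$ bound on $T_\lambda$'' you interpolate against does not exist: the Fourier multiplier of $T_\lambda f=\widehat{\mu_\lambda}*f$ is the singular measure $\mu_\lambda$, not a bounded function of $\xi$, so $T_\lambda$ is unbounded on $L^2(\R^d)$. What is actually available is a trace-norm bound for $(\rho(D)-z)^{-(b+\I t)}$, $0<b<1$, whose multiplier $|\rho(\xi)-z|^{-b}$ is locally integrable and amenable to Kato--Seiler--Simon --- but that is no longer a single-layer operator. Second, the recombination against the Cauchy kernel, which you yourself flag as the principal obstacle, is the heart of the matter and is left unspecified. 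The integral $\int|\lambda-z|^{-1}\|A\,T_\lambda\,B\|\,\rd\lambda$ genuinely diverges as $\mathrm{Im}\,z\to 0$, and the Stein analytic family that tames it is, once written down, exactly $(\rho(D)-z)^{-(a+\I t)}$: the factor $(\lambda-z)^{-(a+\I t)}$ with $a<1$ is what makes the $\lambda$-integral converge. At that point the layer decomposition has become superfluous and you have rederived the paper's argument. The paper's one-step kernel bound on the complex power handles the Cauchy singularity and the Fourier-decay (Stein--Tomas) input simultaneously, which is why it is shorter and avoids the pitfalls above.
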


%To show: Run latex->dvi to ps-> ps to pdf
%\begin{figure}[h]\label{Figure}
% \begin{center}
% \psset{unit=.27cm}
%\begin{pspicture}(-2,-2)(23,12)
%\pspolygon[linestyle=none,fillstyle=solid,fillcolor=gray](10.5,0)(10.5,2.5)(14,4)(21,10)(21,0)(10.5,0)
%\psline(10.5,2.5)(14,4)
%\psline[linestyle=dotted](14,4)(21,10)
%\psline(10.5,0)(10.5,2.5)
%\psline(21,0)(21,10)
%\psdot(10.5,2.5)
%\psdot(14,4)
%\psdot[dotstyle=o](21,10)
%\psline[linestyle=solid,arrowsize=5pt,arrows=->](0,0)(22.1,0)\rput[l](22.2,0){$\frac{1}{q}$}
%\psline(10.5,-.2)(10.5,.2)\rput[t](10.5,-.5){$\frac{1}{1+r}$}
%\psline(14,-.2)(14,.2)\rput[t](14,-.5){$\frac{d-r}{d}$}
%\psline(21,-.2)(21,.2)\rput[t](21,-.5){$1$}
%\psline[linestyle=solid,arrowsize=5pt,arrows=->](0,0)(0,11.1)\rput[b](0,11.2){$\frac{1}{\alpha}$}
%\psline(-.2,2.5)(.2,2.5)\rput[r](-.5,2.5){$\frac{1}{2(r+1)}$}
%\psline(-.2,4)(.2,4)\rput[r](-1,4.3){$\frac{1}{2}$}
%\psline(-.2,10)(.2,10)\rput[t](-1,10){$1$}
%\end{pspicture}
%\end{center}
%\caption{Range of validity of the estimate \eqref{inequality Schatten low frequency}.}
%\label{fig:1}
%\end{figure}

The proof relies on the following lemma. 

\begin{lemma}[pointwise bounds on complex powers]\label{lemma pointwise bounds on complex powers}
Let $h:\R^n\to \R$ be a smooth real-valued function and fix $\psi\in C^{\infty}(\R^n)$. Assume that
\begin{align}\label{assp. oscillatory integral}
\int_{\R^n}\e^{\I x h(\eta)}\psi(\eta)\rd\eta=\mathcal{O}((1+|x|)^{-r}),\quad x\in\R
\end{align}
for some $r>0$. Given $a\in [1,1+r]$, $t\in\R$, define the tempered distributions 
\begin{align}\label{def: uabpm}
u_{a,t}^{\pm}(\xi,\eta)=\e^{\pi^2(a+\I t)^2}\psi(\eta)(\xi-h(\eta)\pm \I 0)^{-a-\I t},\quad\xi\in \R,\quad\eta\in \R^n.
\end{align}
Then the $(n+1)$-dimensional inverse Fourier transform $v_{a,t}^{\pm}=\mathcal{F}^{-1}u_{a,t}^{\pm}$ satisfies the pointwise estimate
\begin{align*}
\sup_{t\in\R}\sup_{(x,y)\in \R\times\R^n}(1+|x|+|y|)^{1+r-a}|v_{a,t}^{\pm}(x,y)|<\infty.
\end{align*}
\end{lemma}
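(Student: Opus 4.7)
The plan is to split the $(n+1)$-dimensional inverse Fourier transform into a partial transform in $\xi$, which produces a one-dimensional power via the classical identity
$$\mathcal{F}^{-1}_{\xi}\bigl[(\xi\pm\I 0)^{-\lambda}\bigr](x) \;=\; \frac{e^{\mp\I\pi\lambda/2}}{\Gamma(\lambda)}\,x_{\mp}^{\lambda-1}$$
(read via analytic continuation in $\lambda$), and an oscillatory integral in $\eta$. Translating $\xi\mapsto\xi-h(\eta)$ introduces the phase $e^{\I x h(\eta)}$, so performing the remaining $\eta$-inverse-Fourier transform gives the closed form
$$v_{a,t}^{\pm}(x,y) \;=\; e^{\pi^2(a+\I t)^2}\,\frac{e^{\mp\I\pi(a+\I t)/2}}{\Gamma(a+\I t)}\,x_{\mp}^{a-1+\I t}\,\Psi(x,y),$$
where
$$\Psi(x,y)\;:=\;(2\pi)^{-n}\!\int_{\R^n} e^{\I(xh(\eta)+y\cdot\eta)}\,\psi(\eta)\,\rd\eta.$$

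The Gaussian weight $e^{\pi^2(a+\I t)^2}$, with modulus $e^{\pi^2 a^2-\pi^2 t^2}$, is chosen to swamp the growth of $1/\Gamma(a+\I t)$: Stirling's formula gives $|\Gamma(a+\I t)|^{-1}\lesssim e^{\pi|t|/2}(1+|t|)^{1/2-a}$, while $|e^{\mp\I\pi(a+\I t)/2}|\le e^{\pi|t|/2}$, so the super-exponential factor $e^{-\pi^2 t^2}$ dominates and bounds the prefactor uniformly in $t\in\R$. Since $|x_{\mp}^{a-1+\I t}|=|x|^{a-1}$ on the relevant half-line, this reduces everything to
$$\sup_{t\in\R}|v_{a,t}^{\pm}(x,y)|\ \lesssim\ |x|^{a-1}\,|\Psi(x,y)|.$$

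The heart of the proof is the oscillatory integral bound $|\Psi(x,y)|\lesssim (1+|x|+|y|)^{-r}$. I would split by the size of $|y|$ relative to $|x|$. In the non-stationary regime $|y|\ge 2(1+\|\nabla h\|_{L^\infty(\supp\psi)})|x|$, the phase gradient $x\nabla h(\eta)+y$ has modulus at least $|y|/2$ on $\supp\psi$; iterating the integration-by-parts operator $L=-\I(x\nabla h+y)\cdot\nabla_\eta/|x\nabla h+y|^2$ then gives $|\Psi(x,y)|\le C_N(1+|y|)^{-N}$ for any $N$. In the complementary regime $|y|\lesssim|x|$, the phase equals $x\bigl(h(\eta)+(y/x)\cdot\eta\bigr)$, a uniformly bounded linear perturbation of $xh(\eta)$; since such perturbations leave unchanged the derivatives of order $\ge 2$ that drive the decay in \eqref{assp. oscillatory integral} (via van der Corput or stationary phase), the bound persists uniformly in $y$, giving $|\Psi(x,y)|\lesssim (1+|x|)^{-r}\asymp(1+|x|+|y|)^{-r}$.

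Combining and using $|x|^{a-1}\le (1+|x|+|y|)^{a-1}$ (valid for $a\ge 1$) yields $\sup_t|v_{a,t}^{\pm}(x,y)|\lesssim (1+|x|+|y|)^{a-1-r}$, i.e.\ the claim. I expect justifying the $\Psi$-bound in the stationary regime to be the main technical point, since \eqref{assp. oscillatory integral} is literally stated only for the $y=0$ slice, and the needed uniformity in $y$ has to be read off from the structural reason for the decay (consistent with the way the hypothesis is derived from \eqref{assp. decay of the FT} in the applications) rather than from the stated bound alone.
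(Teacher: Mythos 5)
Your proof takes the same route as the paper's: compute the partial inverse Fourier transform in $\xi$ using the complex-power identity (H\"ormander's Example 7.1.17), which produces the phase $\e^{\I x h(\eta)}$, the factor $x_{\mp}^{a-1+\I t}/\Gamma(a+\I t)$ and a prefactor; use the Gaussian weight $\e^{\pi^2(a+\I t)^2}$ (modulus $\e^{\pi^2(a^2-t^2)}$) to dominate the $\e^{\pi|t|/2}$-growth of $1/|\Gamma(a+\I t)|$ and of $\e^{\pm\I\pi(a+\I t)/2}$; absorb $|x|^{a-1}\leq(1+|x|+|y|)^{a-1}$ using $a\geq1$; and reduce to $|\Psi(x,y)|\lesssim(1+|x|+|y|)^{-r}$. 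The paper's cruder estimate $|\Gamma(a+\I t)|^{-1}\leq C\e^{\pi^2|a+\I t|^2/2}$ and your Stirling bound serve the same purpose.

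Your final observation is the one genuine point of contrast, and it is correct: \eqref{assp. oscillatory integral} as literally written only controls the $y=0$ slice of $\Psi$, and the bound $|\Psi(x,y)|\lesssim(1+|x|+|y|)^{-r}$ does not formally follow from it. The paper's proof says the conclusion follows ``by combining \eqref{assp. oscillatory integral} and \eqref{eq: partial inverse Fourier transform of complex power}'', implicitly treating \eqref{assp. oscillatory integral} as a bound on the full Fourier transform of the graph measure --- which is what \eqref{assp. decay of the FT} in fact supplies in the one place the lemma is invoked, and what your non-stationary/stationary split is trying to recover from the $y=0$ statement alone. A clean fix is to replace \eqref{assp. oscillatory integral} by
\[
\int_{\R^n}\e^{\I(xh(\eta)+y\cdot\eta)}\psi(\eta)\,\rd\eta=\mathcal{O}\bigl((1+|x|+|y|)^{-r}\bigr),\qquad (x,y)\in\R\times\R^n;
\]
then your case split is unnecessary and both proofs close immediately. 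So: no gap in your argument beyond the one you yourself flagged as residing in the lemma's hypothesis rather than in the proof.
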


\begin{proof}
By a change of variables $\tau=\xi-h(\eta)$ and by \cite[Example 7.1.17]{MR1065993} the partial inverse Fourier transform of $(\xi-h(\eta)\pm \I 0)^{-a-\I t}$ with respect to the variable $\tau$ is given by 
\begin{align}\label{eq: partial inverse Fourier transform of complex power}
\mathcal{F}^{-1}_{\tau\to x}\{(\tau-h(\eta)\pm \I 0)^{-a-\I t}\}(x)=(2\pi)^{-1/2}\e^{\pm \I\pi(a+\I t)/2}\e^{\I x h(\eta)}\chi_{\mp}^{a+\I t-1}(x)
\end{align}
where $\chi_{\pm}^z$ are distributions on $\R$ given by
\begin{align*}
\chi_{\pm}^z(x)=\frac{x_{\pm}^z}{\Gamma(z+1)},\quad z\in\C.
\end{align*} 
Observing that $|\Gamma(a+\I t)|^{-1}\leq C\e^{\pi^2|a+\I t|^2/2}$, we get the desired bound  by combining \eqref{assp. oscillatory integral} and \eqref{eq: partial inverse Fourier transform of complex power}.
\end{proof}

\begin{proof}[Proof of Proposition \ref{prop. schatten estimate low frequency}]
The proof is similar to that of Lemma 4.4 in \cite{MR3608659}. The difference is that the pointwise estimamte (4.13) in \cite{MR3608659} is replaced by 
\begin{align}\label{eq. pointwise bound complex power of resolvent}
\sup_{x\in\R^d}(1+|x|)^{1+r-a}|\chi(D)(\rho(D)-z)^{-(a+\I t)}(x)|\leq C\e^{Ct^2},
\end{align}   
where $1\leq a\leq 1+r$ and $t\in\R$.
The above is a consequence of~\eqref{assp. decay of the FT} and Lemma~\ref{lemma pointwise bounds on complex powers} with $n=d-1$. This can be seen by invoking the implicit function theorem, by which we may locally write 
\begin{align}\label{factorization}
\rho(\xi)=e(\xi)(\xi_1-h(\xi')),\quad \xi=(\xi_1,\xi'),
\end{align}
in appropriate coordinates, where $e\neq 0$. Then $S$ is locally the graph of $h$ over the $\xi'$-plane and \eqref{assp. oscillatory integral} is satisfied.
Together with the Hardy-Littlewood-Sobolev inequality, \eqref{eq. pointwise bound complex power of resolvent} implies the Hilbert-Schmidt bound
\begin{align*}
\|A^{a+\I t}\chi(D)(\rho(D)-z)^{-(a+\I t)}B^{a+\I t}\|_{\mathfrak{S}^2}^2\leq 
C\e^{Ct^2}\|A\|_{\frac{2ad}{d-1-r+a}}\|B\|_{\frac{2ad}{d-1-r+a}}
\end{align*}
for $1\leq a\leq 1+r$. Complex interpolation with the trivial bound
\begin{align*}
\|A^{\I t}\chi(D)(\rho(D)-z)^{-\I t}B^{\I t}\|_{\mathfrak{S}^2}^2\leq 
C\e^{Ct^2}
\end{align*}
yields \eqref{inequality Schatten low frequency} for $\frac{d}{d-r}\leq q\leq 1+r$. The estimate for $1\leq q<\frac{d}{d-r}$ does not depend on the assumption \eqref{assp. decay of the FT}; the argument is similar to the case $1\leq q<\frac{2d}{d+1}$ in the proof of~\cite[Lemma 4.4]{MR3608659}. As in that case one proves
\begin{align*}
\|A\chi(D)(\rho(D)-z)^{-(b+\I t)}B\|_{\mathfrak{S}^1}\leq 
C\e^{Ct^2}(1-b)^{-1}\|A\|_2\|B\|_2,\quad 0<b<1
\end{align*}
and interpolates with
\begin{align*}
\|A\chi(D)(\rho(D)-z)^{-(1+r+\I t)}B\|_{\mathfrak{S}^2}\leq 
C\e^{Ct^2}\|A\|_2\|B\|_2,
\end{align*}
giving
\begin{align*}
\|A\chi(D)(\rho(D)-z)^{-1}B\|_{\mathfrak{S}^{1+}}\leq 
C\|A\|_2\|B\|_2.
\end{align*}
The last bound is again interpolated with the estimate \eqref{inequality Schatten low frequency} for $q$ in the range already proven. This yields \eqref{inequality Schatten low frequency} for the whole range of $q$; see Figure \ref{Figure}.
\end{proof}

\begin{remark}
In \cite[Lemma 4.4]{MR3608659} we have $\rho(\xi):=T(\xi)-\lambda$, and the estimate was proved under the assumption that $S$ has everywhere nonvanishing Gaussian curvature. The latter implies that \eqref{assp. decay of the FT} holds with $r=(d-1)/2$. The Schatten space estimates for the resolvent of the Laplacian were first proved by Frank-Sabin \cite{MR3730931}. By a duality argument, \eqref{inequality Schatten low frequency} implies 
\begin{align}\label{uniform Lp-Lp' by duality}
\|\chi(D)(\rho(D)-z)^{-1}\|_{L^p\to L^{p'}}\leq C,\quad \frac{1}{r+1}\leq \frac{1}{p}-\frac{1}{p'}\leq 1,
\end{align}
but is strictly stronger; in fact, \eqref{uniform Lp-Lp' by duality} is equivalent to \eqref{inequality Schatten low frequency} with $\alpha_{q,r,d}=\infty$. The estimate \eqref{uniform Lp-Lp' by duality} for the imaginary part of the resolvent follows already from a result of Greenleaf \cite[Theorem 1]{MR620265}; see also \cite[VII.5.15]{MR1232192}. 
\end{remark}

\begin{corollary}\label{cor. schatten estimate low frequency finite type}
Assume that $S\cap\supp\chi$ is compact and of finite type $k$. Then~\eqref{inequality Schatten low frequency} holds with $r=1/k$.
\end{corollary}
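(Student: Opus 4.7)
The plan is to verify the hypothesis \eqref{assp. decay of the FT} of Proposition \ref{prop. schatten estimate low frequency} with exponent $r=1/k$, after which that proposition yields the claim directly.

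Since $S\cap\supp\chi$ is compact, I would cover it by finitely many coordinate patches on each of which, after a rigid motion, $S$ is locally the graph $\xi_1=h(\xi')$ of a smooth function with $h(0)=0$, $\nabla h(0)=0$, and with some partial derivative $\partial^\alpha h(0)$ of order $|\alpha|\leq k$ nonzero; this is exactly the content of the remark preceding the corollary. Subordinate to this cover, I would introduce a smooth partition of unity and thereby reduce $\widehat{\chi\,\rd\sigma_S}(x)$ to a finite sum of oscillatory integrals of the form
\[ I(x)=\int_U \e^{-\I(x_1 h(\eta)+x'\cdot\eta)}\Phi(\eta)\,\rd\eta,\quad \Phi\in C_c^\infty(U). \]

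The core step is to prove $|I(x)|\leq C(1+|x|)^{-1/k}$ uniformly in $x$. Writing $\lambda=|x|$ and $\omega=x/|x|=(\omega_1,\omega')$, the phase equals $\lambda\phi_\omega(\eta)$ with $\phi_\omega(\eta)=\omega_1 h(\eta)+\omega'\cdot\eta$. The case $\lambda\leq 1$ is trivial. For $\lambda\geq 1$ I would partition the unit sphere $S^{d-1}$ of directions $\omega$ into two regions. When $|\omega_1|$ is sufficiently small, $|\nabla_\eta\phi_\omega|$ is bounded below on $\supp\Phi$, and repeated integration by parts yields decay of arbitrary order. When $|\omega_1|\geq c>0$, the finite-type hypothesis on $h$ produces a direction $v\in S^{d-2}$ in which the restriction of $\phi_\omega$ to an appropriate affine 1D slice has a derivative of order at most $k$ bounded below. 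The 1D van der Corput lemma (Proposition 2 in Section VIII.1 of \cite{MR1232192}), applied to the slice and combined with a trivial bound in the transverse variables, then gives the $\lambda^{-1/k}$ decay.

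The main, essentially only, subtlety is the uniform-in-$\omega$ van der Corput estimate, which requires a dissection of $S^{d-1}$ ensuring that the non-vanishing order-$k$ derivative in the chosen direction remains bounded below on each piece; this is handled by a standard compactness argument on $S^{d-1}\times(S\cap\supp\chi)$ combined with the stability of the finite-type condition under small perturbations. Everything else is routine: combining the patches, summing over the partition to obtain \eqref{assp. decay of the FT} with $r=1/k$, and finally invoking Proposition \ref{prop. schatten estimate low frequency}, which gives $\alpha_{q,r,d}=\alpha_{q,1/k,d}$.
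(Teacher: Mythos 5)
Your proof is correct and takes essentially the same route as the paper: reduce to verifying the decay hypothesis \eqref{assp. decay of the FT} with $r=1/k$ and then invoke Proposition~\ref{prop. schatten estimate low frequency}. The only difference is that the paper simply cites the known Fourier decay estimate for finite-type hypersurfaces (Lemma~\ref{lemma decay of the FT} for $d=2$ and \cite[Theorem VIII.2]{MR1232192} for $d\geq 3$) rather than re-deriving it via the nonstationary/van der Corput dichotomy you spell out.
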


\begin{proof}
Under the finite type assumption \eqref{assp. decay of the FT} holds with $r=1/k$.
For $d=2$ this follows from Lemma \ref{lemma decay of the FT}. For $d\geq 3$ it follows from \cite[Theorem VIII.2]{MR1232192}. 
\end{proof}

\begin{corollary}\label{cor. schatten estimate low frequency principal curvature}
Assume that $S\cap \supp\chi$ is compact and has at least $l\leq d-1$ everywhere nonvanishing principal curvatures. Then \eqref{inequality Schatten low frequency} holds with $r=l/2$.
The constant is locally uniform in $\rho$ in the $C^2$-topology. 
\end{corollary}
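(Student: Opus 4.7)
[Proof plan for Corollary \ref{cor. schatten estimate low frequency principal curvature}]
The plan is to reduce the statement to Proposition \ref{prop. schatten estimate low frequency} by verifying the Fourier decay hypothesis \eqref{assp. decay of the FT} with exponent $r = l/2$. Once this is done, the Schatten bound \eqref{inequality Schatten low frequency} follows directly.

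First I would localize. Since $S \cap \supp\chi$ is compact, I pick a finite smooth partition of unity $\{\psi_j\}$ subordinate to a covering of $S\cap \supp\chi$ by coordinate patches in which, after a rotation, $S$ is the graph of a smooth function $h_j$ of $(d-1)$ variables with $h_j(0) = 0$, $\nabla h_j(0) = 0$. By the implicit function theorem this is possible in a neighborhood of every point of $S$, and by shrinking $\delta_1$ the representation is $C^2$-stable in $\rho$. It then suffices to bound each piece $\widehat{\psi_j \chi \rd\sigma_S}$.

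The main step is the Fourier decay estimate: if $S$ has at least $l$ nonvanishing principal curvatures at every point of $\supp\chi\cap S$, then
\[
|\widehat{\psi_j\chi\rd\sigma_S}(x)| \leq C(1+|x|)^{-l/2}.
\]
This is the classical theorem of Littman, derived from the method of stationary phase: in the graph coordinates, the Fourier transform reduces to an oscillatory integral with phase $x_1 h_j(\xi') + x' \cdot \xi'$, and the Hessian of this phase with respect to $\xi'$ equals $x_1 \nabla^2 h_j(\xi')$ plus terms not affecting the signature. Since the Gaussian curvature of $S$ is (up to a positive factor) $\det \nabla^2 h_j$, and more generally the nonvanishing of $l$ principal curvatures corresponds to $\nabla^2 h_j$ having rank at least $l$, stationary phase gives the decay $|x_1|^{-l/2}$, which combines with trivial bounds in the tangential directions to give $(1+|x|)^{-l/2}$. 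The precise statement (with constants depending only on finitely many $C^N$-seminorms of $h_j$ and lower bounds on the nonzero curvatures) is \cite[Ch.~VIII, \S 3]{MR1232192}. The $C^2$-uniformity in $\rho$ follows because both the graph representation via the implicit function theorem and the principal curvatures (eigenvalues of the second fundamental form) depend continuously on $\rho$ in the $C^2$-topology.

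With \eqref{assp. decay of the FT} verified for $r=l/2$, Proposition \ref{prop. schatten estimate low frequency} applies and yields \eqref{inequality Schatten low frequency} with $\alpha_{q,l/2,d}$. I do not expect any serious obstacle here; the only subtlety is bookkeeping the uniformity of constants in the local representations across the partition of unity, which is standard.
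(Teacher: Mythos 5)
Your proposal is correct and follows essentially the same route as the paper: verify the Fourier decay hypothesis \eqref{assp. decay of the FT} with $r=l/2$ via Littman's stationary-phase theorem, then invoke Proposition~\ref{prop. schatten estimate low frequency}, and argue that the constant is stable because nonvanishing of $l$ principal curvatures is stable under $C^2$-perturbations. The only structural difference is that the paper packages the stability step into a separate quantitative statement (Lemma~\ref{lemma stability curvature}, which tracks the second fundamental form $P A P$ through the perturbation), whereas you argue it informally; your own caveat — that Littman's constant depends on lower bounds for the nonzero curvatures together with finitely many higher $C^N$-seminorms of the graphing functions — is exactly the point that Lemma~\ref{lemma stability curvature} isolates, and is worth keeping in mind since strict $C^2$-closeness alone controls the curvature bound but not the higher seminorms entering the stationary-phase constant.
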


\begin{proof}
Littman \cite{MR0155146} proved that the stated assumption implies the decay estimate~ \eqref{assp. decay of the FT} with $r=l/2$. The last claim follows from Lemma \ref{lemma stability curvature}.
\end{proof}

\begin{lemma}[Stability of curvature]\label{lemma stability curvature}
Let $S_1$ and $S_2$ be smooth hypersurfaces in $\R^d$ with normalized defining functions $\rho_1$ and~$\rho_2$. Assume that $S_1$ has $l\leq d-1$ nonvanishing principal curvatures at $\xi_0\in S_1$. Then there exists a neighborhood $U$ of $\xi_0$ in $\R^d$ and a constant $c>0$ such that whenever $\|\rho_1-\rho_2\|_{C^2(U)}\leq c$, then $S_2$ has $l$ nonvanishing principal curvatures in $S_2\cap U$.     
\end{lemma}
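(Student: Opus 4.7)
The plan is to reduce the assertion to continuous dependence of eigenvalues of symmetric matrices on the matrix entries. Recall that for a normalized defining function $\rho$, the second fundamental form of $S$ at $\xi\in S$ is the restriction of $\nabla^2\rho(\xi)$ to $T_\xi S=(\nabla\rho(\xi))^\perp$, and the principal curvatures are its eigenvalues. Hence ``$S$ has at least $l$ nonvanishing principal curvatures at $\xi$'' is equivalent to the $(d-1)\times(d-1)$ symmetric matrix representing this restricted Hessian, in any orthonormal basis of $T_\xi S$, having at least $l$ eigenvalues of absolute value bounded away from~$0$. The task becomes showing that this rank-type lower bound persists under $C^2$-small perturbations of $\rho$ and small displacements of the base point~$\xi$.

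First I would fix local structure. Since $|\nabla\rho_1(\xi_0)|=1$ and $\rho_1$ is smooth, after choosing $c$ small and $U$ a small ball around $\xi_0$ both $|\nabla\rho_1|$ and $|\nabla\rho_2|$ are bounded below by $1/2$ on $U$. The implicit function theorem then guarantees that $S_j\cap U$ is a smooth hypersurface for $j=1,2$. Since $\rho_1(\xi_0)=0$ and $\|\rho_1-\rho_2\|_{C^0(U)}\leq c$, the set $S_2\cap U$ is nonempty and lies within distance $O(c)$ of $\xi_0$. Next I would build a continuous orthonormal tangent frame: starting from a fixed orthonormal basis $(e_1,\ldots,e_{d-1})$ of $T_{\xi_0}S_1$, Gram-Schmidt applied to a basis of $\R^d$ adapted to $\nabla\rho(\xi)$ produces an orthonormal basis $(e_1(\xi,\rho),\ldots,e_{d-1}(\xi,\rho))$ of $(\nabla\rho(\xi))^\perp$ depending continuously on $(\xi,\rho)$ in the $C^1$ topology and reducing to $(e_1,\ldots,e_{d-1})$ at $(\xi_0,\rho_1)$.

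In this frame the symmetric matrix $M(\xi,\rho)$ of the restricted Hessian has entries $\nabla^2\rho(\xi)(e_i(\xi,\rho),e_j(\xi,\rho))$, which depend continuously on $(\xi,\rho)$ with $\rho$ topologized in $C^2(U)$. By hypothesis $M(\xi_0,\rho_1)$ has at least $l$ eigenvalues of absolute value $\geq 2\eta$ for some $\eta>0$. By the min-max characterization of eigenvalues of symmetric matrices, there is an open neighborhood of $M(\xi_0,\rho_1)$ in the space of symmetric $(d-1)\times(d-1)$ matrices on which at least $l$ eigenvalues have absolute value $\geq\eta$. Shrinking $U$ and $c$ so that $M(\xi,\rho_2)$ lies in this neighborhood for every $\xi\in S_2\cap U$ then gives the conclusion.

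The main (and essentially only) obstacle is bookkeeping: verifying that $M(\xi,\rho)$ depends jointly continuously on $\xi$ and on $\rho$ in the $C^2$ topology on~$U$. This reduces to continuity of the Gram-Schmidt output on a basis depending continuously on $\nabla\rho(\xi)$, which is elementary. There is no deeper issue beyond the standard perturbation theory for eigenvalues of symmetric matrices.
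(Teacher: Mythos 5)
Your proposal is correct and follows essentially the same route as the paper: reduce to perturbation stability of eigenvalues of the restricted Hessian, using the $C^2$-closeness of $\rho_1,\rho_2$ to control both $\nabla^2\rho$ and the tangent plane (via the gradients), then invoke the stability of nonzero eigenvalues of symmetric matrices. The only difference is presentational: the paper compares the compressions $P_1\nabla^2\rho_1(\xi_0)P_1$ and $P_2\nabla^2\rho_2(\eta_0)P_2$ as operators on $\R^d$ and bounds $\|P_1-P_2\|$ directly from the normals, whereas you pass to $(d-1)\times(d-1)$ matrices in a continuously varying Gram--Schmidt frame and use min--max; both are the same argument in slightly different clothing.
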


\begin{proof}
By assumption, we have
\begin{align}\label{stability curvature 1}
\sup_{\xi\in U}\|\nabla^2\rho_1-\nabla^2\rho_2\|\leq d^{1/2}c,
\end{align}
where $\|\cdot\|$ denotes the operator norm of $d\times d$-matrices. Let $U$ be a neighborhood of $\xi_0$ in $\R^d$ such that $\|\rho_1-\rho_2\|_{C^2(U)}\leq c$. We will later choose $\diam(U)$ and $c>0$ sufficiently small. Let $\xi_0\in S_1$ and $\eta_0\in S_2\cap U$. Using the triangle inequality, we get
\begin{align}\label{stability curvature 2}
|\nabla\rho_1(\xi_0)-\nabla\rho_2(\eta_0)|
\leq c+\|\rho_2\|_{C^2(U)}\diam(U).
\end{align}
Let $P_1$ and $P_2$ be the orthogonal projections in $\R^d$ onto the tangent spaces $T_{\xi_0}S_1=\{\nabla\rho_1(\xi_0)\}^{\perp}$ and $T_{\eta_0}S_2=\{\nabla\rho_2(\eta_0)\}^{\perp}$, respectively. Then by \eqref{stability curvature 2}, 
\begin{align}\label{stability curvature 3}
\|P_1-P_2\|^2=1-\langle \nabla\rho_1(\xi_0),\nabla\rho_2(\eta_0)\rangle_{\R^d}\leq c+\|\rho_2\|_{C^2(U)}\diam(U).
\end{align}
Set $A_1:=\nabla^2\rho_1(\xi_0)$ and $A_2:=\nabla^2\rho_2(\eta_0)$. Then \eqref{stability curvature 1} and \eqref{stability curvature 3} imply that
\begin{align*}
\|P_1A_1P_1-P_2A_2P_2\|&=\|P_2(A_1-A_2)P_2+P_2A_1(P_1-P_2)+(P_1-P_2)A_1P_1\|\\
&\leq \|A_1-A_2\|+2\|A_1\|\|P_1-P_2\|\\
&\leq d^{1/2}c+2\|\rho_1\|_{C^2(U)}(c+\|\rho_2\|_{C^2(U)}\diam(U))^{1/2}.
\end{align*}
By assumption, the spectrum of $P_1A_1P_1$ contains at least $l$ nonzero eigenvalues $\lambda_1,\ldots,\lambda_k$. Define $\varepsilon:=\min_{1\leq j\leq k}|\lambda_j|>0$, and choose $U$, $c$ such that
\begin{align*}
d^{1/2}c+2\|\rho_1\|_{C^2(U)}(c+\|\rho_2\|_{C^2(U)}\diam(U))^{1/2}\leq \varepsilon/2.
\end{align*}  
By the stability of bounded invertibility it follows that $P_2A_2P_2$ has at least $l$ nonzero eigenvalues. This proves the claim.
\end{proof}

The following is a version of \cite[Lemma 4.4]{MR3608659} near a nondegenerate critical point.

\begin{proposition}\label{prop. schatten estimate low frequency critical point}
Let $T:\R^d\to\R$ be a smooth function with a nondegenerate critical point $\xi_0$ and corresponding value $\lambda_{\rm c}\in\R$. Then there exists $\delta>0$ such that for fixed $\chi\in C_c^{\infty}(B_{\delta}(\xi_0))$, $d/2< q\leq (d+1)/2$ and for all $A,B\in L^{2q}(\R^2)$, $z\in\rho(T(D))$,  
we have the estimate
\begin{align}\label{inequality Schatten low frequency critical point}
\|A\chi(D)(T(D)-z)^{-1}B\|_{\mathfrak{S}^{\alpha_{q,r,d}}}\leq C|z-\lambda_{\rm c}|^{\frac{d}{2q}-1}\|A\|_{2q}\|B\|_{2q},
\end{align}
uniformly for $z$ in a punctured neighborhood of $\lambda_{\rm c}$. Here,
\begin{align*}
\alpha_{q,r,d}:=\begin{cases}
\frac{q(d-1)}{d-q},\quad &\mbox{if  }\frac{2d}{d+1}\leq q\leq \frac{d+1}{2},\\
\frac{q(d-1)+}{d-q},\quad &\mbox{if  }1\leq q<\frac{2d}{d+1}.
\end{cases}
\end{align*}
If $\xi_0$ is a local extremum, then the same estimate holds for $q=d/2$. If $\xi_0$ is a saddle point the same is true if $C$ is replaced by $-C\ln|z-\lambda_{\rm c}|$.
\end{proposition}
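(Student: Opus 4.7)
The plan is to reduce the statement to Proposition \ref{prop. schatten estimate low frequency} via a scaling argument. After a translation in frequency and a scalar shift, assume $\xi_0=0$ and $\lambda_{\rm c}=0$. Set $\sigma:=|z|^{1/2}$ and conjugate with the $L^2$-unitary dilation defined by $(U_\sigma f)(x):=\sigma^{d/2}f(\sigma x)$. Using $U_\sigma^{-1}m(D)U_\sigma=m(\sigma D)$ and the fact that $U_\sigma^{-1}AU_\sigma$ is multiplication by $A(\,\cdot\,/\sigma)$, one computes
\begin{align*}
U_\sigma^{-1}A\chi(D)(T(D)-z)^{-1}B\,U_\sigma = |z|^{-1}A_\sigma\widetilde\chi_\sigma(D)\bigl(\widetilde T_\sigma(D)-\e^{\I\arg z}\bigr)^{-1}B_\sigma,
\end{align*}
where $A_\sigma(x):=A(x/\sigma)$, $\widetilde\chi_\sigma(\eta):=\chi(\sigma\eta)$ is supported in $\{|\eta|\leq\delta/\sigma\}$, and $\widetilde T_\sigma(\eta):=\sigma^{-2}T(\sigma\eta)=Q_0(\eta)+O(\sigma|\eta|^3)$ is a $C^2$-perturbation of the Hessian quadratic form $Q_0(\eta)=\tfrac12\langle\nabla^2 T(0)\eta,\eta\rangle$. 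Schatten unitary invariance together with $\|A_\sigma\|_{2q}\|B_\sigma\|_{2q}=|z|^{d/(2q)}\|A\|_{2q}\|B\|_{2q}$ already produces the claimed factor $|z|^{d/(2q)-1}$, and it remains to bound the rescaled operator in $\mathfrak{S}^{\alpha_{q,r,d}}$ uniformly in small $\sigma$ and in the phase $\e^{\I\arg z}$.

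For this uniform bound I would invoke Proposition \ref{prop. schatten estimate low frequency} with a normalized defining function of the rescaled Fermi surface $M_\sigma:=\{\widetilde T_\sigma=\re\e^{\I\arg z}\}$. By Lemma \ref{lemma stability curvature}, for $\sigma$ small the principal curvatures of $M_\sigma$ approximate those of the $Q_0$-level set on any fixed compact set avoiding the origin. The portion of $\widetilde\chi_\sigma$ on which the symbol is smooth and uniformly bounded produces a contribution in $\mathfrak{S}^q$ via the Kato-Seiler-Simon inequality. If $\xi_0$ is a local extremum, $Q_0$ is definite and its level set is a compact ellipsoid with all $d-1$ principal curvatures non-vanishing, so Corollary \ref{cor. schatten estimate low frequency principal curvature} with $l=d-1$ (hence $r=(d-1)/2$) delivers the desired $\alpha_{q,r,d}$-Schatten bound with constants independent of $\sigma$. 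The endpoint $q=d/2$ is permitted because the scaling exponent satisfies $|z|^{d/(2q)-1}=1$ there.

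The main obstacle is the saddle case. Then $Q_0$ is indefinite and its level sets are smooth but non-compact hyperboloids, so $\widetilde\chi_\sigma$ captures a portion of diameter $\sim 1/\sigma$. A dyadic decomposition of $\widetilde\chi_\sigma$ combined with van der Corput bounds, using the nonvanishing curvature of the hyperboloid away from its asymptotic cone and treating a thin neighborhood of the cone separately via Kato-Seiler-Simon (where the quadratic form degenerates in one direction), shows that the Fourier transform of the cut-off surface measure satisfies \eqref{assp. decay of the FT} with $r=(d-1)/2$ but with a constant that grows at worst like $-\log\sigma$. Propagating this logarithmic loss through the proof of Proposition \ref{prop. schatten estimate low frequency} (where it enters linearly in the pointwise bound \eqref{eq. pointwise bound complex power of resolvent} and hence in the Hilbert-Schmidt factor of the complex interpolation) produces the factor $-\ln|z-\lambda_{\rm c}|$ claimed at $q=d/2$; for $q>d/2$ the logarithm is absorbed either by the ``$+$'' slack in $\alpha_{q,r,d}$ for $1\leq q<2d/(d+1)$ or by the strictly negative scaling exponent $d/(2q)-1$, leaving a log-free estimate.
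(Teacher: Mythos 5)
Your reduction---translation to $\xi_0=0$, $\lambda_{\rm c}=0$, rescaling by $\sigma=|z|^{1/2}$ so that the symbol becomes a $C^n$-small perturbation of the Hessian quadratic form $Q$, the scaling factor $|z|^{d/(2q)-1}$ coming from the $L^{2q}$-norms of $A_\sigma,B_\sigma$, and the application of Corollary~\ref{cor. schatten estimate low frequency principal curvature} plus Lemma~\ref{lemma stability curvature} when $Q$ is definite (compact ellipsoidal level set)---coincides with the paper's argument. The gap is in the saddle case, and it is not a minor omission: your proposed route would fail.

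First, the claim that the Fourier transform of the cut-off surface measure of the truncated hyperboloid $\{\widetilde T_\sigma=1\}\cap B_{\delta/\sigma}(0)$ decays like $(1+|x|)^{-(d-1)/2}$ with only a $\log\sigma$ constant is not correct. On the hyperboloid $\{Q=1\}$, the Gaussian curvature decays \emph{polynomially} toward the asymptotic cone: at distance $R$ from the origin it is of order $R^{-2(d-1)}$ (in $d=2$, $\kappa\sim R^{-3}$). A stationary phase estimate for a direction $x$ whose stationary point lies at scale $R\lesssim1/\sigma$ contributes a constant $\sim R^{3/2}$ (already for $d=2$); taking the supremum over $x$, the constant in \eqref{assp. decay of the FT} is therefore polynomial in $1/\sigma$, not logarithmic. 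Treating the cone neighborhood by Kato--Seiler--Simon does not save this because the stationary set moves into that neighborhood precisely for the directions $x$ near the cone, and KSS gives no $x$-decay there.

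Second, even if one did have a multiplicative $\log(1/\sigma)$ loss, the "absorption'' step would not work. The Proposition asserts a clean bound $C|z-\lambda_{\rm c}|^{d/(2q)-1}$ for all $d/2<q\le(d+1)/2$ with no log, and reserves the log factor for the endpoint $q=d/2$ only. A factor $-\ln|z-\lambda_{\rm c}|$ multiplied against $|z-\lambda_{\rm c}|^{d/(2q)-1}$ (with a strictly negative exponent, so both factors blow up as $z\to\lambda_{\rm c}$) is strictly larger than $|z-\lambda_{\rm c}|^{d/(2q)-1}$; there is no cancellation to exploit. Likewise, the $+$ slack in $\alpha_{q,r,d}$ is slack in the Schatten exponent, not in the $z$-dependence, so it cannot absorb a $z$-dependent log either.

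The paper's argument avoids both problems by working directly with the complex powers $(\rho\pm\I 0)^{-a-\I t}$ rather than the surface measure. It Littlewood--Paley decomposes the rescaled frequency space $|\eta'|\le\delta\lambda^{-1/2}$ into $\sim-\ln\lambda$ dyadic shells, rescales each shell to unit size, and proves a pointwise bound of the form
\begin{equation*}
|\mathcal{F}^{-1}\{\phi_k(\rho\pm\I 0)^{-a-\I t}\}(x)|\lesssim 2^{k(d-2a)}(1+2^k|x|)^{-\frac{d-1}{2}}|2^kx|^{a-1},
\end{equation*}
where the crucial factor $2^{k(d-2a)}$ comes from the Jacobians and the scaling of the complex power (not from the curvature degeneracy of the surface measure). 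Summing over $0\le k\le-c\ln\lambda$ yields a geometric series that converges for $a>d/2$ and diverges logarithmically at $a=d/2$; this is precisely the source of the dichotomy in the statement (no log for $d/2<q\le(d+1)/2$, a log at $q=d/2$), and it is invisible if one only tries to bound $\widehat{\chi\,\rd\sigma_S}$ directly. You would need to replace the surface-measure estimate by this shell-by-shell estimate on the complex powers, and then carry the summability threshold $a=d/2$ through the interpolation, to recover the claimed result.
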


\begin{proof}
Without loss of generality we may assume that $\xi_0=0$ and $\lambda_{\rm c}=0$. We will pick $\delta$ at least so small that $B_{\delta}(0)$ does not contain any other critical point besides the origin. Let $Q$ be the quadratic form
\begin{align*}
Q(\xi):=\frac{1}{2}\langle\xi,H\xi\rangle_{\R^d}
\end{align*} 
where $H=\nabla^2T(0)$ is the Hessian of $T$ at the origin. By a linear change of variables we may assume that
\begin{align}\label{def. Q}
Q(\xi)=\xi_1^2+\ldots+\xi_j^2-\xi_{j+1}^2-\xi_d^2
\end{align}
where $2j-d$ is the signature of $Q$. By standard arguments involving the Phragmen-Lindel\"of maximum principle we may assume that $z=\lambda\pm\I 0$, where $\lambda\in\R$ lies in a small punctured neighborhood of the origin in $\R$. We will not need to distinguish between the two limits and so, by abuse of notation, we just write $\lambda$ to denote either of those limits.
Moreover, by possibly multiplying $T$ by $-1$ we may assume $\lambda>0$. By a change of scale $\xi=\lambda^{1/2}\eta$, it then suffices to prove that
\begin{align}\label{inequality Schatten low frequency critical point rescaled}
\|A\chi(D)(\lambda^{-1}T(\lambda^{1/2}D)-1)^{-1}B\|_{\mathfrak{S}^{\alpha_{q,r,d}}}\leq C\|A\|_{2q}\|B\|_{2q},
\end{align}
where $\chi$ is now supported in $B_{\delta\lambda^{-1/2}}(0)$. We have
\begin{align}\label{eq. Taylor T}
\lambda^{-1}T(\lambda^{1/2}\eta)-1=Q(\eta)-1+\lambda^{1/2}\mathcal{O}_{C^{n}}(|\eta|^3),\quad \eta\in B_{\delta\lambda^{-1/2}}(0)
\end{align}
for any fixed $n$, which we assume to be sufficiently large. Here, $g=\mathcal{O}_{C^{n}}(|\xi|^3)$ means $\partial^{\alpha}g=\mathcal{O}(|\xi|^{(3-|\alpha|)_+})$ for $|\alpha|\leq n$ as $\xi\to 0$. If we set $M_{\lambda}:=\set{\xi\in\R^d}{T(\xi)=\lambda}$, then $\rho(\eta):=\lambda^{-1}T(\lambda^{1/2}\eta)-1$ is an approximately normalized defining function of  $S:=\lambda^{-1/2}M_{\lambda}$ in the sense that $c\leq |\nabla\rho|\leq 1/c$ on $S$ with a constant $c>0$ that is uniform for $\lambda>0$ in a small neighborhood of the origin; hence we may as well assume $\rho$ to be normalized. 

We first consider the cases when the signature of $Q$ is $d$ or $-d$. Since these two cases differ from each other only by a change of sign we only treat the first one. Since $|\rho(\eta)|\geq 1/2$ for $|\eta|\leq 1/2$ or $|\eta|\geq 3/2$ and for $\delta$ sufficiently small, we may restrict our attention to the region $A:=\set{\eta\in\R^d}{1/2\leq |\eta|\leq 3/2}$, i.e.\ we assume that $\chi$ localizes to this region. In view of \eqref{eq. Taylor T}, $\rho$ is an $\mathcal{O}_{C^n(A)}(\lambda^{1/2})$-perturbation of $\rho_1(\eta):=Q(\eta)-1$. Since $\rho_1$ is an approximately normalized defining function of the unit sphere we can apply Corollary \ref{cor. schatten estimate low frequency principal curvature} to conclude \eqref{inequality Schatten low frequency critical point rescaled}.   

Now consider the case when the signature of $Q$ is different from $\pm d$, i.e.\ when $j\in \{1,\ldots,d-1\}$ in \eqref{def. Q}. The set $\set{\eta\in\R^d}{Q(\eta)=1}$ is a two-sheeted hyperboloid, which in contrast to the sphere in the previous case is noncompact. While the localization $\chi$ introduces a cutoff to the frequency scale $|\eta|\leq\delta\lambda^{-1/2}$, the latter is not uniform with respect to small $\lambda$, so we cannot apply Corollary \ref{cor. schatten estimate low frequency principal curvature}.
Instead, we will prove that a modification of \eqref{assp. oscillatory integral} holds and appeal to (the proof of) Proposition \ref{prop. schatten estimate low frequency} directly. By the implicit function theorem we may solve the equation $\rho(\eta)=0$ for one of the first $j$ variables. By a partition of unity of the $j-1$ sphere we may thus assume that $\rho(\eta)=e(\eta)(\eta_1-h(\eta'))$ on the set where $|\rho(\eta)|\leq 1/2$. Let $1=\sum_{k=0}^{-c\ln\lambda}\phi_k(\eta')$ be a Littlewood-Paley decomposition of the set $|\eta'|\leq \delta\lambda^{-1/2}$. We claim that
\begin{align}\label{pointwise estimate at frequency k II}
|\mathcal{F}^{-1}\{\phi_k(\rho\pm \I 0)^{-a-\I t}\}(x)|\lesssim 2^{k(d-2a)}(1+2^k|x|))^{-\frac{d-1}{2}}|2^k x|^{a-1}.
\end{align}
By a change of variables and in view of the obvious lower bound $|e(\eta)|\gtrsim |\eta|$ the proof would follow from the pointwise estimate
\begin{align}\label{pointwise estimate at frequency k}
\int_{\R^{d-1}}\e^{\I x h(2^k\eta')}\psi(\eta')\rd\eta'=\mathcal{O}((1+2^k|x|))^{-\frac{d-1}{2}},\quad x\in\R
\end{align}
analogously to the proof of Lemma \ref{lemma pointwise bounds on complex powers}.
Summing \eqref{pointwise estimate at frequency k II} over $0\leq k\leq -c\ln \lambda$ we get
\begin{align*}
\mathcal{F}^{-1}\{\chi(\rho\pm \I 0)^{-a-\I t}\}\in L^{\infty}(\R^d),\quad \frac{d}{2}<a\leq \frac{d+1}{2},
\end{align*}
locally uniformly in $\lambda$. At the endpoint $a=d/2$ one obtains an $\mathcal{O}(-\ln\lambda)$ bound due to logarithmic divergence of the sum. The proof of \eqref{pointwise estimate at frequency k} follows by standard stationary phase arguments upon observing that the Hessian of $2^{-k}h(2^k\eta')$ is uniformly nondegenerate; this may be seen by differentiating the implicit equation $\rho(h(\eta'),\eta')=0$ twice.
\end{proof}

\bibliographystyle{abbrv}
\bibliography{C:/Users/Jean-Claude/Dropbox/papers/bibliography_masterfile}
\end{document}